\newtheorem{thm}{Theorem}[section]
\newtheorem{cor}[thm]{Corollary}
\newtheorem{lem}[thm]{Lemma}
\newtheorem{prop}[thm]{Proposition}
\theoremstyle{definition}
\newtheorem{defn}[thm]{Definition}
\theoremstyle{remark}
\newtheorem{rem}[thm]{Remark}
\numberwithin{equation}{section}
\def\Coker{\operatorname{Coker}}
\begin{document}

\title{K-regularity of locally convex algebras}
\author{Hvedri Inassaridze}
\address{A.~Razmadze Mathematical Institute of Tbilisi State University, 6, Tamarashvili Str., Tbilisi 0177, Georgia}
\address{Tbilisi Centre for Mathematical Sciences, Chavchavadze Ave.~75, 3/35, Tbilisi 0168, Georgia}
\email{inassari@gmail.com}
\phone{mobile +995 599157836 }
\thanks{The author would like to thank Larry Schweitzer for useful comments. This research was supported by Volkswagen Foundation, grant 85989 and Shota Rustaveli National Science Foundation, grant DI/12/5-103/11}
\subjclass[2010]{19K99, 19D25, 19D55, 19L99}
\keywords{Smooth K-groups, K-regularity, properly uniformly bounded approximate unit, quasi stable locally convex algebra}

\begin{abstract} The isomorphism of Karoubi-Villamayor $K$-groups with smooth $K$-groups for monoid algebras over quasi stable locally convex algebras is established and we prove that the Quillen $K$-groups are isomorphic to smooth $K$-groups for monoid algebras over quasi-stable Frechet algebras having a properly uniformly bounded approximate unit. Based on these results the $K$-regularity property for quasi-stable Frechet algebras having a properly uniformly bounded approximate unit is established.
\end{abstract}

\maketitle

\section{introduction}

The $K$-regularity is an important $K$-theoretical property of rings. It is closely related to the homotopy property of functors and to the Fundamental Theorem of algebraic $K$-theory.The investigation of $K$-regularity appears in Grothendieck works for regular rings and is treated in problems of algebraic $K$-theory by Bass \cite{Ba}.

The starting point is the Grothendieck-Serre classical theorem stating that a regular ring is $K_{0}$-regular. This result was further extended by Bass-Heller-Swan for the Whitehead-Bass algebraic K-functor $K_{1}$ and by Quillen for all algebraic K-functors $K_{n}$, $n\geq 1$ \cite{Qu}. The $K$-regularity property of rings was also introduced and studied by Gersten \cite{Ge} and Karoubi \cite{Ka}.

In this direction more recent results were obtained by Gubeladze for the general case when the rings of polynomials are replaced by monoid rings satisfying certain conditions and the relationship was established between the $K$-regularity of a monoid ring $R$ and its algebraic properties \cite{Gu}. Such relationship was also studied by Cortinas, Haesemeyer and Weibel \cite{CoHaWe} when $R$ is a commutative ring of finite type over a field of characteristic $0$ or $R$ is a non-reduced scheme with underlying space an elliptic curve over a field of characteristic $0$. For commutative $C*$-algebras the $K$-regularity was established by Rosenberg \cite{Ro}. Further results about K-regularity of operator algebras were obtained for stable $C^\ast$-algebras \cite{In} and for stable profinite $C^{\ast}$-algebras \cite{InKa}.

Our aim is to find a wide class of locally convex algebras for which the $K$-regularity property holds. For this purpose first we will investigate the relationship between Karoubi-Villamayor algebraic $K$-theory and topological $K$-theory in the category of locally convex complex algebras. Then the existing comparison of Quillen algebraic K-theory and topological K-theory \cite{SuWo, Wo, CoTh, Ro1, InKa1} will be extended to monoid algebras over locally convex algebras. To establish these results the topological invariants introduced in \cite{InKa1} and called smooth $K$-groups will be used. It should be noted that K-theories related to smooth K-groups have been treated in \cite{CoTh, Ti}.

\;


\section{Preliminaries}

In this section we recall some definitions and propositions given in \cite{InKa1} which will be used later.

By a locally convex algebra we mean an algebra over the field of complex numbers equipped with a Hausdorff complete locally convex topology and jointly continuous multiplication The category $\mathbf{ALC}$ of locally convex algebras is closed under the Grothendieck projective
tensor product \cite{Tr}.

\begin{defn}
Let
$$
0\rightarrow C \rightarrow  B \rightarrow A \rightarrow 0
$$
be a sequence of morphisms in the category $\mathbf{LC}$ of locally convex linear topological spaces and continuous linear maps. It will be said that this sequence is a proper exact sequence if $f$ is a homeomorphism of $C$ on $Imf$, $g$ is an open surjective map and $Imf=Kerg$. It will be said  proper split exact if it is proper exact and $g$ has a right inverse in $\mathbf{LC}$. A short exact sequence in the category $\mathbf{ALC}$ is said proper exact sequence if it is split exact sequence in the category $\mathbf{LC}$.
\end{defn}

\begin{prop}
If
$$
0\rightarrow C \rightarrow B \rightarrow A \rightarrow 0
$$
is a proper exact sequence of locally convex algebras and $D$ is a locally convex algebra, then the sequence
$$
0\rightarrow D\widehat\otimes C\rightarrow D\widehat\otimes B \rightarrow
D\widehat\otimes A \rightarrow 0
$$
is a proper exact sequence.
\end{prop}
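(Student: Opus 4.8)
The plan is to exploit the definition recalled above: a proper exact sequence in $\mathbf{ALC}$ is one whose underlying sequence of locally convex spaces is \emph{split} in $\mathbf{LC}$, so that the surjection $g\colon B\to A$ carries a continuous linear section $s\colon A\to B$. The complete projective tensor product fails to be exact on arbitrary short exact sequences, but it \emph{is} additive, and every additive functor preserves split exact sequences. Thus the whole argument reduces to transporting the linear splitting through $D\widehat\otimes\blank$ and checking that the tensored maps remain algebra homomorphisms.

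First I would record the topological direct-sum decomposition produced by the section. Writing $f\colon C\to B$ and $g\colon B\to A$ for the two maps, the section $s$ yields a continuous linear idempotent $p\defeq \id_B-s\circ g$ whose image lies in $\ker g=\operatorname{Im}f$; since $f$ is a homeomorphism onto its image, $r\defeq f^{-1}\circ p\colon B\to C$ is a continuous linear retraction with $r\circ f=\id_C$. One checks that $(r,g)$ and $(f,s)$ are mutually inverse continuous linear maps, so they exhibit a topological isomorphism
\[
B\;\cong\;C\oplus A
\]
in $\mathbf{LC}$, under which $f$ and $g$ become the canonical inclusion of, and projection onto, the two summands.

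Next I would apply $D\widehat\otimes\blank$. The essential input is that the complete projective tensor product commutes with finite direct sums: the universal property identifying $D\widehat\otimes\blank$ with the left adjoint of the continuous–bilinear (internal-hom) functor shows it preserves finite coproducts, which in $\mathbf{LC}$ coincide with finite products. Concretely, the canonical algebraic identification $D\otimes(C\oplus A)=(D\otimes C)\oplus(D\otimes A)$ carries the projective topology to the direct-sum topology and passes to completions, giving a topological isomorphism $D\widehat\otimes B\cong(D\widehat\otimes C)\oplus(D\widehat\otimes A)$. Under it the maps $D\widehat\otimes f$ and $D\widehat\otimes g$ are again inclusion and projection, so the tensored sequence is split exact in $\mathbf{LC}$ with section $\id_D\,\widehat\otimes\,s$; as these maps are algebra homomorphisms by functoriality of $\widehat\otimes$ on $\mathbf{ALC}$, the sequence is by definition proper exact.

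The hard part is confined entirely to this additivity statement: one must verify that the projective topology on $D\otimes(C\oplus A)$ genuinely agrees with the direct-sum topology on $(D\otimes C)\oplus(D\otimes A)$ and that completion does not disturb this identification — which is unproblematic for \emph{finite} direct sums of complete spaces, but is the one point requiring care. Everything else is formal, since the section furnished by properness is precisely what circumvents the failure of exactness of $\widehat\otimes$; no analytic estimate is needed beyond this standard property of the projective tensor product.
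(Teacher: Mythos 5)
Your argument is correct: since the paper defines a proper exact sequence in $\mathbf{ALC}$ to be one that splits in $\mathbf{LC}$, the continuous linear section reduces everything to the additivity of $D\widehat\otimes\blank$ on finite topological direct sums, and your construction of the retraction $r=f^{-1}\circ(\id_B-s\circ g)$ and the resulting decomposition $B\cong C\oplus A$ is sound. The paper itself states this proposition without proof, recalling it from the earlier work of Inassaridze--Kandelaki, but the intended argument there is exactly this standard one, so there is nothing to flag beyond noting that your one ``point requiring care'' (compatibility of the projective topology and completion with finite direct sums) is indeed the only nontrivial input.
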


The smooth $K$-theory was introduced in \cite{InKa1} for locally convex algebras. The same definition is valid for arbitrary real or complex topological algebras and is completely similar. Namely, let $A^{\infty(I)}$ be the topological algebra of smooth maps from the unit interval $I$ to the topological algebra $A$. Any continuous homomorphism of topological algebras $\varphi: A\rightarrow A'$ induces a homomorphism of topological algebras $\varphi^{\infty(I)}:
A^{\infty(I)}\rightarrow A'^{\infty(I)}$. For any topological algebra $A$ consider the evaluation maps at $t=0$ and $t=1$
$$
A^{\infty(I)} \rightarrow A, i=0,1, \varepsilon_{0}(f)-f(0), \varepsilon_{1}=f(1).
$$
Denote by $\mathfrak{I}(A)$ the kernel of $\varepsilon_{0}$ and by
$\tau_{A}: \mathfrak{I}(A) \rightarrow A$ the restriction of
$\varepsilon_{1}$ on $\mathfrak{I}(A)$. There is a smooth
homomorphism $\delta_{A}:\mathfrak{I}(A) \rightarrow \mathfrak{I}^{2}(A)$ sending $f\in \mathfrak{I}(A)$ to $\delta_{A}(f)(s,t)=f(st)$. One gets the smooth path cotriple $\mathfrak{I}$ (for locally convex algebras see \cite{InKa1}) which induces the augmented simplicial group
$$
GL(\mathfrak{I}^{+}_{*}(A))=GL(\mathfrak{I}_{*}(A)) \rightarrow GL(A).
$$

\begin{defn}
For any topological algebra $A$ the smooth $K$-functors $K^{sm}_{n}, n\geq0$ are defined as follows
$$
K^{sm}_{n}(A)=\pi_{n-2}GL(\mathfrak{I}_{*}(A))
$$
for $n\geq 3$, $K^{sm}_{0}(A)=K_{0}(A)$ and for $n=1,2$ are defined by the exact sequence
$$
0\rightarrow K^{sm}_{2}(A)\rightarrow\pi_{0}(GL(\mathfrak{I}_{*}(A))\rightarrow GL(A)\rightarrow K^{sm}_{1}(A)\rightarrow0.
$$
\end{defn}

All defined smooth $K$-groups are abelian (the proof for the case $n=1,2$ is similar to the case of locally convex algebras \cite{InKa1}).

Let $\mathfrak{F}$  and $\mathfrak{P}$ be the free cotriple and the polynomial cotriple respectively on the category of associative rings and $\mathfrak{J}$ the continuous path cotriple on the category of topological algebras. Then one has natural morphisms
$$
\mathfrak{F}\overset{\alpha}\rightarrow\mathfrak{P}\overset{\delta}\rightarrow\mathfrak{I}\overset{\beta}\rightarrow\mathfrak{J}.
$$
For any topological algebra $A$ the homomorphism $\alpha_{A}:\mathfrak{F}(A) \rightarrow\mathfrak{P}(A)$ is given by
$\alpha_{A}(|a|)(ax)$, $|a|\in F_{A}$, where $F_{A}$ is the free algebra generated by $A$, $\delta_{A}(ax) (t\mapsto at), t\in I$, and one has the inclusion $\beta_{A}: \mathfrak{I}(A) \rightarrow\mathfrak{J}(A)$.

The topological $K$-groups $K^{top}_{n}(A)=\pi_{n-2}GL(\mathfrak{J}^{+}_{*}(A))$, $K^{top}_{0}(A)=K_{0}(A))$, were defined by Swan \cite{Sw} and the algebraic K-groups $kv_{n}(A)=\pi_{n-2}GL(\mathfrak{P}^{+}_{*}(A))$ by Karoubi and Villamayor \cite{KaVi} for $n\geq 1$. The morphisms $\alpha$, $\delta$ and $\beta$ induce respectively functorial homomorphisms
$$
K_{n}(A) \rightarrow kv_{n}(A)\rightarrow K^{sm}_{n}(A)\rightarrow K^{top}_{n}(A)
$$
for $n\geq1$, where $K_{*}(A)$ are Quillen's $K$-groups which are isomorphic to Swan's algebraic $K$-functors defined in \cite{Sw} similarly to the topological case the continuous path cotriple replaced by the free cotriple. These homomorphisms are surjective for $n=1$.

\begin{defn}
Two homomorphisms $f,g: A \rightarrow B$ of topological algebras are said smoothly homotopic if there exists a continuous homomorphism $h: A \rightarrow B^{\infty(I)}$ such that $\varepsilon_{0}h=\varepsilon_{1}h$ which is called the smooth homotopy between $f$ and $g$.
\end{defn}

Denote by $\mathbf{Gr}$ and $\mathbf{Ab}$ the category of groups and abelian groups respectively.

\begin{defn}
A functor $T:\mathbf{A}\rightarrow \mathbf{Gr}$ is called smooth homotopy functor if $T(f)=T(g)$ for smoothly homotopic $f$ and $g$.
\end{defn}

\begin{prop}\label{Prop_2.6}
Let $\mathbf{B}$ be a full subcategory of the category $\mathbf{A}$ of topological algebras containing with any topological algebra $A$ the topological algebra $A^{\infty(I)}$. A functor $T:\mathbf{B}\rightarrow \mathbf{Gr}$ is a smooth homotopy functor if and only if the inclusion
 $i:A \rightarrow A^{\infty(I)}$ induces an isomorphism $T(i):T(A) \rightarrow T(A^{\infty(I)})$ for any topological algebra $A$ of the category $\mathbf{B}$.
\end{prop}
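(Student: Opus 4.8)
The plan is to exhibit $T(i)$ as an isomorphism by producing an explicit two-sided inverse at the level of $T$, the two ingredients being the strict identity $\varepsilon_0 i = \id_A$ and a smooth contraction of $A^{\infty(I)}$ onto its constant paths. Note first that both evaluation maps split the inclusion: $\varepsilon_0 i = \varepsilon_1 i = \id_A$, since evaluating a constant smooth path recovers its value. The whole argument rests on one homotopy, the scaling map $H \colon A^{\infty(I)} \to (A^{\infty(I)})^{\infty(I)}$ defined by $H(f)(s)(t) = f(st)$, which is the unrestricted version of the cotriple comultiplication $\delta$. Its endpoints are $\varepsilon_0 H = i\varepsilon_0$ (since $H(f)(0)(t) = f(0)$ is the constant path at $f(0)$, i.e.\ $i(\varepsilon_0 f)$) and $\varepsilon_1 H = \id_{A^{\infty(I)}}$ (since $H(f)(1)(t) = f(t)$), so $H$ is a smooth homotopy from $i\varepsilon_0$ to the identity of $A^{\infty(I)}$.

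For the direction ($\Leftarrow$), I would argue as follows. Assume $T(i)$ is an isomorphism for every object. Applying $T$ to $\varepsilon_0 i = \varepsilon_1 i = \id$ gives $T(\varepsilon_0)T(i) = T(\varepsilon_1)T(i) = \id$, and cancelling the isomorphism $T(i)$ yields $T(\varepsilon_0) = T(\varepsilon_1)$. Now if $f, g \colon A \to B$ are smoothly homotopic via $h \colon A \to B^{\infty(I)}$ with $\varepsilon_0 h = f$ and $\varepsilon_1 h = g$, then $T(f) = T(\varepsilon_0)T(h) = T(\varepsilon_1)T(h) = T(g)$, so $T$ is a smooth homotopy functor. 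This direction is essentially formal.

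For ($\Rightarrow$), assume $T$ is a smooth homotopy functor. From $\varepsilon_0 i = \id_A$ we get $T(\varepsilon_0)T(i) = \id_{T(A)}$. Applying the homotopy invariance of $T$ to the smooth homotopy $H$ above gives $T(i\varepsilon_0) = T(\id_{A^{\infty(I)}})$, that is $T(i)T(\varepsilon_0) = \id_{T(A^{\infty(I)})}$. Hence $T(\varepsilon_0)$ is a two-sided inverse of $T(i)$, and $T(i)$ is an isomorphism. The closure hypothesis on $\mathbf{B}$ enters precisely here: it guarantees that $A^{\infty(I)}$ and, by iteration, $(A^{\infty(I)})^{\infty(I)}$ lie in $\mathbf{B}$, so that $T$ may be applied to all the maps in sight.

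The routine but genuine work, and the only real obstacle, is the verification that $H$ is a morphism of the ambient category, i.e.\ a continuous algebra homomorphism $A^{\infty(I)} \to (A^{\infty(I)})^{\infty(I)}$. This holds because $H$ is the pullback of smooth paths along the smooth multiplication map $I \times I \to I$, $(s,t) \mapsto st$: such a pullback is automatically additive and multiplicative, the algebra operations being pointwise, while smoothness of $(s,t)\mapsto st$ together with smoothness of $f$ gives joint smoothness of $(s,t) \mapsto f(st)$, hence a genuine element of $(A^{\infty(I)})^{\infty(I)}$ depending continuously on $f$. This is exactly the computation already recorded for $\delta_A$ in the preliminaries, now carried out on all of $A^{\infty(I)}$ rather than on the kernel $\mathfrak{I}(A)$, and I would simply invoke it.
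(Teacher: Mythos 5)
Your proof is correct, and it follows exactly the route the paper relies on: the proposition is recalled here without proof from \cite{InKa1}, but the key ingredient --- the scaling homotopy $H(f)(s)(t)=f(st)$ contracting $A^{\infty(I)}$ onto the constant paths, combined with $\varepsilon_0 i=\id_A$ --- is the same device the paper itself uses later (via $\delta_A$) to show $\mathfrak{I}(A)$ is smoothly contractible. Your handling of the converse direction and of the closure hypothesis on $\mathbf{B}$ is also the standard and intended argument, so there is nothing to add.
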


\begin{prop}
The topological $K$-functors $K^{top}_{n}$ and the smooth $K$-functors $K^{sm}_{n}$ are smooth homotopy functors for $n\geq1$ on the category of topological algebras.
\end{prop}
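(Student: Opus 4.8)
The plan is to deduce the statement from Proposition~\ref{Prop_2.6}. Since the relevant category is closed under $A\mapsto A^{\infty(I)}$, it suffices to prove that the inclusion $i\colon A\to A^{\infty(I)}$ as constant paths induces an isomorphism on $K^{sm}_n$ and on $K^{top}_n$ for every $n\geq 1$. I would treat the two functors simultaneously, the only difference being that the smooth path cotriple $\mathfrak{I}$ is replaced by the continuous path cotriple $\mathfrak{J}$; every formula below is identical in the two cases. Injectivity is immediate and formal: the evaluation $\varepsilon_0\colon A^{\infty(I)}\to A$ satisfies $\varepsilon_0 i=\id$, so $K^{sm}_n(\varepsilon_0)\,K^{sm}_n(i)=\id$ and $K^{sm}_n(i)$ is a split monomorphism, and likewise for $K^{top}_n$.

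The whole content is therefore to show that $K^{sm}_n(\varepsilon_0)$ is injective too, equivalently that the endomorphism $i\varepsilon_0\colon A^{\infty(I)}\to A^{\infty(I)}$, which sends a path $\phi$ to the constant path at $\phi(0)$, induces the identity on $K^{sm}_n(A^{\infty(I)})$. Here I would use that $i\varepsilon_0$ is smoothly homotopic to the identity via the homomorphism $h$ with $\varepsilon_0 h=i\varepsilon_0$ and $\varepsilon_1 h=\id$, given by $h(\phi)(s)(t)=\phi(st)$. The decisive observation is that this is exactly the formula defining the comultiplication $\delta_A(\phi)(s,t)=\phi(st)$ of the cotriple; consequently the homotopy is already built into the cotriple structure and, after applying $GL(\mathfrak{I}_*(-))$, promotes to a simplicial homotopy (the manifestation of $\delta$ as an extra codegeneracy of the augmented simplicial group) between $GL(\mathfrak{I}_*(i\varepsilon_0))$ and the identity. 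Since simplicially homotopic maps of simplicial groups agree on every homotopy group, $i\varepsilon_0$ induces the identity on $\pi_m GL(\mathfrak{I}_*(A^{\infty(I)}))$ for all $m$. This settles $K^{sm}_n$ for $n\geq 3$ and, because $K^{sm}_2$ is by definition the natural kernel of the augmentation $\pi_0 GL(\mathfrak{I}_*(-))\to GL(-)$, it settles $K^{sm}_2$ as well.

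The case $n=1$ needs a separate but elementary argument, since $GL(-)$ is itself not a smooth homotopy functor. Recall $K^{sm}_1(A)=\coker(\pi_0 GL(\mathfrak{I}_*(A))\to GL(A))$. Given a smooth homotopy $h\colon A\to B^{\infty(I)}$ between $f=\varepsilon_0 h$ and $g=\varepsilon_1 h$ and an element $x\in GL(A)$, the element $GL(h)(x)\,GL(i)(GL(f)(x))^{-1}$ lies in $\ker GL(\varepsilon_0)=GL(\mathfrak{I}(B))$ and is carried by $\tau_B=\varepsilon_1$ to $GL(g)(x)\,GL(f)(x)^{-1}$. Hence $GL(f)(x)$ and $GL(g)(x)$ differ by an element of the image of the augmentation, so they coincide in $K^{sm}_1(B)$; applying this to $f=i\varepsilon_0$ and $g=\id$ yields the required identity $K^{sm}_1(i\varepsilon_0)=\id$. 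The same three steps, verbatim with $\mathfrak{J}$ in place of $\mathfrak{I}$ and continuous homotopies in place of smooth ones, dispose of $K^{top}_n$.

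I expect the main obstacle to be the second paragraph: verifying that the homotopy $h(\phi)(s)(t)=\phi(st)$ really does assemble into a simplicial homotopy satisfying all the simplicial identities at the level of $GL(\mathfrak{I}_*(-))$, which is the point where the counit $\tau$ and the comultiplication $\delta$ must be used compatibly. This is the genuinely technical, though by now standard, step, parallel to the polynomial case of Karoubi--Villamayor and the continuous case of Swan. A secondary point requiring care is the low-degree bookkeeping, where one must track the naturality of the defining four-term exact sequence so as to transport the conclusion from $\pi_0 GL(\mathfrak{I}_*(-))$ to $K^{sm}_2$ and $K^{sm}_1$.
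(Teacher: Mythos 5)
First, a point of comparison: this proposition is stated in the paper's ``Preliminaries'' section, which explicitly only \emph{recalls} results from \cite{InKa1}; the paper contains no proof of it, so there is nothing in the text to match your argument against. On its own terms, your first and third paragraphs are fine: the reduction via Proposition~\ref{Prop_2.6}, the split injectivity of $K^{sm}_n(i)$ from $\varepsilon_0 i=\id$, and the direct computation for $n=1$ (showing $GL(g)(x)GL(f)(x)^{-1}$ lies in $\tau_B\bigl(GL(\mathfrak I(B))\bigr)$) are all correct.

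The gap is the second paragraph, and it is not merely an unverified technicality: the simplicial homotopy you assert does not exist. Write $B=A^{\infty(I)}$ and $u_0=i\varepsilon_0$, $u_1=\id$. A one-step simplicial homotopy between $\mathfrak I_*(u_0)$ and $\mathfrak I_*(u_1)$ requires in degree $0$ a single map $h_0\colon \mathfrak I(B)\to\mathfrak I^2(B)$ with $d_0h_0=\mathfrak I(u_0)$ and $d_1h_0=\mathfrak I(u_1)$, where $d_0=\tau_{\mathfrak I B}$ and $d_1=\mathfrak I(\tau_B)$. But $\tau_B\circ d_0=\tau_B\circ d_1$ (both evaluate at the corner $(1,1)$), so the existence of $h_0$ would force $\tau_B\circ\mathfrak I(u_0)=\tau_B\circ\mathfrak I(u_1)$, i.e.\ $u_0\circ\tau_B=u_1\circ\tau_B$ by naturality of $\tau$ --- which is false, since $i\varepsilon_0\tau_B(\Phi)$ is the constant path at $\Phi(1)(0)$ while $\tau_B(\Phi)=\Phi(1)$. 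The same obstruction survives applying $GL$ and persists along any zigzag of elementary simplicial homotopies. Your identification of $\delta$ as the engine is the right instinct, but its role as an extra degeneracy is to contract the augmented simplicial object $\mathfrak I_*(\mathfrak I C)\to\mathfrak I C$, not to produce a homotopy between $i\varepsilon_0$ and $\id$ on $\mathfrak I_*(\mathfrak I^+C)$. The two maps are only \emph{weakly} homotopic (they agree on homotopy groups), and that is exactly what still has to be proved.

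The standard repair keeps your overall strategy but replaces the homotopy by a fibration argument. Using $\mathfrak I^{n+1}(A^{\infty(I)})\cong(\mathfrak I^{n+1}(A))^{\infty(I)}$, the proper split exact sequence $0\to\mathfrak I(C)\to C^{\infty(I)}\to C\to 0$ applied levelwise gives a split short exact sequence of simplicial groups
$$
1\to GL(Z_*)\to GL(\mathfrak I_*(A^{\infty(I)}))\to GL(\mathfrak I_*(A))\to 1,
$$
where $Z_n=\mathfrak I^{n+2}(A)$ with faces $\mathfrak I^{i+1}\tau\mathfrak I^{n-i}$. Here $\delta$ really does act as an extra degeneracy, contracting $Z_*$ onto $\mathfrak I(A)$, so $\pi_nGL(Z_*)=0$ for $n\geq1$ and $\pi_0GL(Z_*)=GL(\mathfrak I(A))$. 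The long exact homotopy sequence of this split fibration gives $\pi_nGL(\mathfrak I_*(A^{\infty(I)}))\cong\pi_nGL(\mathfrak I_*(A))$ for $n\geq1$, hence the cases $n\geq3$, and comparing the resulting extension of $\pi_0$'s with the split extension $1\to GL(\mathfrak I(A))\to GL(A^{\infty(I)})\to GL(A)\to1$ settles $K^{sm}_2$ and $K^{sm}_1$ by the five lemma. The identical argument with $\mathfrak J$ in place of $\mathfrak I$ handles $K^{top}_n$.
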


\begin{defn}\label{Def_2.8}
Let $f: B\rightarrow A$ be a continuous injective algebra homomorphism for a Banach algebra $B$. If an approximate unit of the Frechet algebra $A$ is the image of a bounded approximate unit of $B$, then it is called properly uniformly bounded approximate unit of the Frechet algebra $A$.
\end{defn}

If $A$ is a m-convex Frechet algebra, then set $A_{b}$ of uniformly bounded elements in $A$ becomes a Banach algebra with respect to the norm $\parallel a \parallel = sup \parallel a \parallel_{n}$. Suppose $A$ has a bounded approximate unit in $A_{b}$. Then by considering the inclusion $A_{b}\hookrightarrow A$ and by Definition \ref{Def_2.8} this approximate unit is a properly uniformly bounded approximate unit and in this particular case it is called uniformly bounded approximate unit of $A$.

If $A$ is an arbitrary unital Frechet algebra with the sequence of determining seminorms $\|.\|_{n}$, then with respect to the inclusion $\mathbb Ce\hookrightarrow A$ the unit $e$ is a properly uniformly bounded approximate unit of $A$. Note that the unit $e$ may be unbounded in $A$, depending on the given sequence of determining seminorms.

The Frechet algebras considered in this article are not necessarily locally multiplicatively convex (m-convex). Finite projective tensor products of Frechet algebras with properly uniformly bounded approximate unit are again Frechet algebras with properly uniformly bounded approximate unit. The unit element of any unital Frechet algebra $A$ is a properly uniformly bounded approximate unit of $A$. Projective limits of countably many C*-algebras are Frechet algebras with properly uniformly bounded approximate unit. Many important examples of functional algebras occur in analysis that are not locally multiplicatively convex and are Frechet algebras with properly uniformly bounded approximate unit (see Examples 1 - 4, \cite{InKa1}). If $A$ is a Frechet algebra with properly uniformly bounded approximate unit, then so is the Frechet algebra $A^{\infty(I)}$.

The main and needed $K$-theoretical property of Frechet algebras with properly uniformly bounded approximate unit is expressed in the following assertion:

\begin{thm}\label{th_2.9}
If $A$ is a Frechet algebra with properly uniformly bounded approximate unit, then it possesses the TF-property  and therefore the excision property
 in algebraic $K$-theory and the $H$-unitality property.
\end{thm}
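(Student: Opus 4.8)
The plan is to split the assertion along its own logical skeleton. The properly uniformly bounded approximate unit is used only to establish the TF-property; the excision property in algebraic $K$-theory and the $H$-unitality are then formal consequences of the triple factorization condition via the Suslin--Wodzicki machinery, since triple factorization forces the bar complex of $A$ to be acyclic, which is exactly $H$-unitality, and $H$-unitality of a $\C$-algebra yields excision in algebraic $K$-theory. So I would concentrate the real work on the single implication: a properly uniformly bounded approximate unit $\Rightarrow$ the TF-property.

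First I would unwind Definition~\ref{Def_2.8}. By hypothesis the approximate unit $(e_\lambda)$ of $A$ has the form $e_\lambda=f(u_\lambda)$ for a continuous injective homomorphism $f\colon B\to A$ out of a Banach algebra $B$ and a \emph{bounded} approximate unit $(u_\lambda)$, so that $\sup_\lambda\|u_\lambda\|_B=C<\infty$. The first reduction I would record is that this bound, combined with the joint continuity of the multiplication of $A$ and the continuity of $f$, upgrades $(e_\lambda)$ to a family of multipliers of $A$ that is uniformly bounded for every defining seminorm: for each $n$ there are $m$ and $C_n$ with $\|e_\lambda a\|_n\le C_n\|a\|_m$ for all $\lambda$ and all $a\in A$. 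Here I would use that a (jointly) continuous bilinear map of Frechet spaces automatically satisfies an estimate $\|xy\|_n\le C\|x\|_p\|y\|_q$, so that $m$-convexity is never invoked; this is what keeps the argument inside the generality the paper insists on.

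With this uniform multiplier bound in hand, the engine of the proof is Cohen's factorization theorem in its module form: $A$, and likewise each tensor power occurring in the bar complex, is a module over $B$ on which the bounded approximate unit $(u_\lambda)$ acts with uniformly bounded multiplier norms, so every element factors with seminorm control inherited from $C$. This is precisely what the TF-property asks for, and I would verify its defining clauses by producing, for any prescribed finite family of elements in a given simplicial degree, a common left and right factor via the single-element and the power (multiple-element) versions of Cohen's theorem. The resulting factorizations then drive the standard contracting homotopy $s(a_0\otimes\cdots\otimes a_k)=e\otimes a_0\otimes\cdots\otimes a_k$ on the bar complex: the uniform bounds make the $\lambda$-limits of the operators $s_\lambda$ exist and contract the complex, which gives $H$-unitality, whereupon excision follows by citing the Suslin--Wodzicki theorem in the form used in \cite{InKa1}.

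The step I expect to be the genuine obstacle is the transition from the \emph{approximate} factorization furnished by a bounded approximate unit to the \emph{exact} triple factorization that the ring-theoretic TF-property and the Suslin--Wodzicki criterion actually require. Cohen's theorem does deliver honest, non-approximate factorizations, but one must check that the factors stay inside $A$ rather than in a completion, that the construction is uniform over the finitely many elements occurring in each degree, and that the seminorm estimates survive the iteration needed to contract the entire bar complex. Controlling all of these simultaneously across every seminorm of the Frechet topology --- rather than in the single Banach norm where Cohen's theorem is cleanest --- is where Definition~\ref{Def_2.8} must be used most carefully, and it is the only place where the specific strength of a \emph{properly uniformly bounded} (as opposed to merely bounded) approximate unit genuinely enters.
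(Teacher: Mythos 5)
The first thing to note is that the paper contains no proof of Theorem~\ref{th_2.9}: it sits in the Preliminaries, which the author announces as material recalled from \cite{InKa1}, so there is no in-paper argument to measure your proposal against. Judged on its own terms, your skeleton is the one the statement itself dictates: the approximate unit is used only to obtain the triple factorization property, and excision and $H$-unitality are then quoted consequences of the Suslin--Wodzicki machinery. Your choice of tool --- the Cohen--Hewitt factorization theorem applied to $A$ viewed as an essential Fr\'echet module over the Banach algebra $B$ furnished by Definition~\ref{Def_2.8} --- is also the standard one, and your remark that joint continuity of multiplication on a Fr\'echet algebra already gives estimates of the form $\|xy\|_{n}\le C_{n}\|x\|_{p}\|y\|_{q}$ without any $m$-convexity is exactly the point that makes the module version of the factorization applicable in the generality the paper insists on.

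There is nevertheless a genuine gap, and you have in effect flagged it yourself without closing it. The $(TF)_{right}$ property is not mere factorization: it demands $a_{i}=b_{i}cd$ for a \emph{common} pair $c,d$ together with the equality of the left annihilators of $c$ and of $cd$. Plain Cohen factorization of the vector $(a_{1},\dots,a_{m})\in A^{m}$ only yields $a_{i}=b_{i}c$ with a single factor $c$ coming from $B$; producing the second factor $d$ and, above all, the annihilator condition requires the refined form of the factorization in which $c$ is built as a convergent product involving the bounded approximate unit, so that $c$ is a controlled perturbation of an invertible element of $B^{+}$, and one must then check that this control survives the passage from $B$ to $A$ through the merely continuous injection $f$. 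None of this is carried out; your final paragraph names it as ``the genuine obstacle'' and stops. A secondary confusion: having chosen the TF route, the contracting homotopy $s_{\lambda}(a_{0}\otimes\cdots\otimes a_{k})=e_{\lambda}\otimes a_{0}\otimes\cdots\otimes a_{k}$ and its $\lambda$-limits are not needed --- that is a different, direct proof of $H$-unitality for Banach algebras with bounded approximate unit --- and invoking both at once blurs which argument you are actually running. As it stands the proposal is a correct plan whose decisive step, the triple factorization with annihilator control in the Fr\'echet module setting, is left unproved.
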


\;

\section{Smooth Karoubi Conjecture and K-regularity}

Besides the aforementioned assertions given in \cite{InKa1} for locally convex algebras we will use the following important property of smooth maps:

\begin{lem}\label{lem_3.1}
There is an isomorphism
$$
A^{\infty(I)}\widehat\otimes B \approx (A\widehat\otimes B)^{\infty(I)}
$$
for any $A$ and $B$ locally convex algebras.
\end{lem}
\begin{proof}
A minor generalization of Theorem 44.1 \cite{Tr} shows that $C^\infty(I)  {\widehat \otimes}_\epsilon A \cong C^\infty(I, A)$, where ${\widehat \otimes}_\epsilon$  is the completed $\epsilon$-tensor product.  By the nuclearity of $C^\infty(I)$, Theorem 50.1 \cite{Tr} says that $C^\infty(I) {\widehat \otimes} A \cong C^\infty(I) {\widehat \otimes}_\epsilon A$. Finally one gets
$$
A^{\infty(I)}\widehat\otimes B \approx ((C)^{\infty(I)}\widehat\otimes A)\widehat\otimes B \approx (C)^{\infty(I)}\widehat\otimes (A\widehat\otimes B) \approx (A\widehat\otimes B)^{\infty(I)}.
$$
This completes the proof.
\end{proof}

\begin{rem}
(1) For $m$-convex locally convex algebras Lemma \ref{lem_3.1} is proved in \cite{Tr}. The proof for the general case is due to Larry Schweitzer.

\noindent (2) It should be noted that this property doesn't hold for continuous maps and arbitrary locally convex algebras even for Banach algebras. That is the reason why we have introduced smooth $K$-groups instead of topological $K$-groups for the investigation of Karoubi Conjecture about the isomorphism of algebraic and topological $K$-functors in the case of locally convex algebras which we call the smooth Karoubi Conjecture  (see \cite{InKa1}).

\noindent (3) This isomorphism also holds when $(-)^{\infty(I)}$ is replaced by $\mathfrak{I}(-)$.
\end{rem}

\begin{thm}\label{th_3.3}
 Let $\mathbf{B}$ be a full subcategory of the category $\mathbf{A}$ of topological algebras containing with any topological algebra $A$ the topological  algebra $A^{\infty(I)}$.
\begin{enumerate}
\item[(1)] the functors $K_{1}$ and $K^{sm}_{1}$ are isomorphic on the category $\mathbf{B}$ if and only if  $K_{1}$ is a smooth homotopy functor on $\mathbf{B}$,
\item[(2)] the functors $kv_{1}$ and $K^{sm}_{1}$ are isomorphic on the category $\mathbf{B}$ if and only if  $kv_{1}$ is a smooth homotopy functor on $\mathbf{B}$.
\end{enumerate}
\end{thm}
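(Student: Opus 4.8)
The plan is to prove both statements in parallel, since (2) differs from (1) only by replacing $K_1$ with $kv_1$. Everything rests on a single homotopy-theoretic observation: the two evaluation maps $\varepsilon_0,\varepsilon_1\colon A^{\infty(I)}\to A$ are smoothly homotopic. Indeed, with $X=A^{\infty(I)}$ and $Y=A$, a smooth homotopy between $\varepsilon_0$ and $\varepsilon_1$ is a map $h\colon A^{\infty(I)}\to A^{\infty(I)}$ with $\varepsilon_0 h=\varepsilon_0$ and $\varepsilon_1 h=\varepsilon_1$, and $h=\id_{A^{\infty(I)}}$ does the job. Hence for any smooth homotopy functor $T\colon\mathbf B\to\mathbf{Gr}$ one has $T(\varepsilon_0)=T(\varepsilon_1)\colon T(A^{\infty(I)})\to T(A)$.

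For the two ``only if'' directions I would use that being a smooth homotopy functor is invariant under natural isomorphism. If $\varphi\colon K_1\xrightarrow{\ \cong\ }K^{sm}_1$ is a natural isomorphism, then for smoothly homotopic $f,g$ naturality gives $K_1(f)=\varphi^{-1}K^{sm}_1(f)\varphi=\varphi^{-1}K^{sm}_1(g)\varphi=K_1(g)$, since $K^{sm}_1$ is a smooth homotopy functor; thus $K_1$ is one too. The identical argument, with $K^{sm}_1$ replaced by the isomorphic $kv_1$, settles (2); alternatively one may route this through Proposition \ref{Prop_2.6} and the induced isomorphism $T(i)$.

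For the two ``if'' directions, recall that the comparison homomorphisms $K_1(A)\to K^{sm}_1(A)$ (induced by $\delta\alpha$) and $kv_1(A)\to K^{sm}_1(A)$ (induced by $\delta$) are already surjective, so only injectivity is needed. Reading $K^{sm}_1(A)=\Coker\bigl(\pi_0 GL(\mathfrak I_*(A))\to GL(A)\bigr)$ off the defining exact sequence, the kernel of $GL(A)\to K^{sm}_1(A)$ is the normal subgroup generated by the image of $GL(\tau_A)$, that is, by the endpoints $u(1)$ of smooth paths $u\in GL(A^{\infty(I)})$ coming from $GL(\mathfrak I(A))$ via $\mathfrak I(A)\hookrightarrow A^{\infty(I)}$, so that $\varepsilon_0(u)=1$ and $\varepsilon_1(u)=\tau_A(u)=u(1)$. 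Viewing the class $[u]\in K_1(A^{\infty(I)})$ (resp. $kv_1(A^{\infty(I)})$) and applying the key fact $K_1(\varepsilon_0)=K_1(\varepsilon_1)$ valid once $K_1$ (resp. $kv_1$) is a smooth homotopy functor, I obtain $[u(1)]=K_1(\varepsilon_1)[u]=K_1(\varepsilon_0)[u]=[u(0)]=[1]=0$ in $K_1(A)$. Hence every generator of $\ker\bigl(GL(A)\to K^{sm}_1(A)\bigr)$ lies in $\ker\bigl(GL(A)\to K_1(A)\bigr)$; the reverse inclusion being automatic, the two kernels coincide and the surjective comparison map is injective, so an isomorphism, natural because the comparison transformation is.

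The main obstacle I anticipate is bookkeeping rather than conceptual. One must correctly identify $\ker\bigl(GL(A)\to K^{sm}_1(A)\bigr)$ through $\pi_0$ of the simplicial group $GL(\mathfrak I_*(A))$ and the augmentation $\tau_A$, verify that the path matrices $u$ genuinely represent classes over $A^{\infty(I)}$ under $\mathfrak I(A)\hookrightarrow A^{\infty(I)}$ with the stated endpoint values, and confirm that the comparison homomorphisms induced by $\alpha$ and $\delta$ are compatible with these quotient descriptions, so that equality of the two subgroups of $GL(A)$ really yields a natural isomorphism of functors. The genuinely homotopy-theoretic input, namely $T(\varepsilon_0)=T(\varepsilon_1)$, is by contrast immediate from the identity homotopy above.
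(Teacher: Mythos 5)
Your argument is correct and turns on the same pivot as the paper's: a smooth homotopy functor must equalize the two evaluations $\varepsilon_0,\varepsilon_1\colon A^{\infty(I)}\to A$ (the identity of $A^{\infty(I)}$ is the homotopy), and hence must kill the endpoint classes $[u(1)]$ of paths $u$ based at the identity --- which are exactly the extra relations that cut $GL(A)$ down to $K^{sm}_1(A)$. The packaging, however, is genuinely different. For the ``if'' directions the paper first identifies $K^{sm}_1(A)$ with $\Coker K_1(\tau)$ by a diagram chase through $E(\mathfrak I(A))$ and $GL(\mathfrak I(A))$, then introduces the smooth homotopization $h^{sm}T(A)=\Coker\bigl(T(A^{\infty(I)})\rightrightarrows T(A)\bigr)$, proves $K^{sm}_1\cong h^{sm}K_1$ using exactness of $K_1$, and for part (2) runs a separate argument via the smooth contractibility of $\mathfrak I(A)$ and the fact that $h^{sm}(kv_1)$ is itself a smooth homotopy functor. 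You instead compare kernels inside $GL(A)$ directly and compute $[u(1)]=K_1(\varepsilon_1)[u]=K_1(\varepsilon_0)[u]=[1]=0$, which treats (1) and (2) uniformly, dispenses with the homotopization formalism, and only uses that the comparison maps are quotients of $GL(A)$ compatible with the augmentations (true because all three $K_1$'s are cokernels of augmented simplicial $GL$'s and the comparison is induced by morphisms of cotriples over the identity of $GL(A)$). What your version buys is economy and a single argument; what the paper's buys is the reusable identification $K^{sm}_1\cong h^{sm}K_1$. Two points worth making explicit in a final writeup: (i) the image of $GL(\tau_A)$ is already normal in $GL(A)$ (conjugating a path based at $1$ by a constant matrix yields another such path), so ``normal subgroup generated by'' is just the image, and the identification of $\ker\bigl(GL(A)\to K^{sm}_1(A)\bigr)$ with it is read off the defining exact sequence since $\pi_0$ of the simplicial group is a quotient of its degree-zero part $GL(\mathfrak I(A))$; (ii) the reverse inclusion of kernels is ``automatic'' only because the natural surjection $K_1(A)\to K^{sm}_1(A)$ is already known to exist from the cotriple comparison --- say so rather than leave it implicit.
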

\begin{proof}
\noindent (1) Let $K_{1}$  and $K^{sm}_{1}$ be isomorphic functors on $\mathbf{B}$ and consider for any topological algebra $A$ the following commutative diagram
$$
\xymatrix{
 K_{1}(A) \ar[d]\ar[r] & K_{1}(A^{\infty(I)})\ar[d]\\
 K^{sm}_{1}(A) \ar[r] & K^{sm}_{1}(A^{\infty(I)})\\
 }
$$
where the vertical homomorphisms and the bottom homomorphism are isomorphisms implying the isomorphism of the top homomorphism. It remains to apply  Proposition \ref{Prop_2.6} showing the smooth homotopy property of $K_{1}$.

Let $K_{1}$ be a smooth homotopy functor on $\mathbf{B}$. The surjection $\mathfrak{I}(A)\rightarrow A$ induces the following commutative diagram with exact rows and columns
$$
\xymatrix{
 0 \ar[d] & 0 \ar[d] \\
 E(J(A)) \ar[d]\ar[r] & E(A) \ar[d]\ar[r] & 0 \\
GL(J(A)) \ar[d]\ar[r] & GL(A) \ar[d]\ar[r] & K_1^{sm}(A) \ar[d]\ar[r] & 0\\
K_1(J(A)) \ar[d]\ar[r]^{\;K_1(\tau)} & K_1(A) \ar[d]\ar[r] & \Coker K_1(\tau) \ar[r] & 0\\
0 & 0
 }
$$
which implies the isomorphism $K^{sm}_{1}(A)\approx CokerK_{1}(\tau)$.

Now we will use the notion of smooth homotopization $h^{sm}T$ of any functor $T:\mathbf{A}\rightarrow \mathbf{Gr}$ defined by
$$
h^{sm}T(A)=Coker (T(A^{\infty(I)})\rightrightarrows T(A)).
$$
By using the exactness property of the functor $K_{1}$, it is easily shown that
$$
(h^{sm}K_{1})(A)\approx CokerK_{1}(\tau).
$$
Thus one gets the isomorphism  $K^{sm}_{1}(A)\approx h^{sm}K_{1}(A)$. It follows that if $K_{1}(A)\approx K_{1}(A^{\infty(I)})$ one obtains the required isomorphism.

\noindent (2) For the first part the proof is similar to the case 1). Let $k v_{1}$ be a smooth homotopy functor on $\mathbf{B}$. Since $K^{sm}_{1}$ is a smooth homotopy functor, the homomorphism $\delta*_{1}: kv_{1}(A) \rightarrow K^{sm}_{1}(A)$ induces a homomorphism
$h^{sm}\delta*_{1}:h^{sm}(kv_{1})(A)\rightarrow K^{sm}_{1}(A)$.

The commutative diagram
$$
\xymatrix{
 0 \ar[d] & 0 \ar[d] \\
GL(J(A)) \ar[d]\ar[r] & GL(A) \ar[d]\ar[r] & K_1^{sm}(A) \ar@{=}[d]\ar[r] & 0\\
h^{sm}(kv_{1})(J(A)) \ar[d]\ar[r] & h^{sm}(kv_{1})(A) \ar[d]\ar[r] & K_{1}^{sm}(A) \ar[r] & 0\\
0 & 0
 }
$$
with exact top row and vertical surjective homomorphisms implies the exactness of the bottom row. The topological algebra $\mathfrak{I}(A)$ is contractible because of the trivial map $0_{\mathfrak{I}(A)}$ and the identity map $1_{\mathfrak{I}(A)}$ are smoothly homotopic with smooth homotopy $\delta_{A}:\mathfrak{I}(A)\rightarrow \mathfrak{I}^{2}(A)$ between them. Since $h^{sm}(kv_{1})$ is a smooth homotopy functor, one has $h^{sm}(kv_{1})(\mathfrak{I}(A)) = 0$ implying the isomorphism $h^{sm}(kv_{1})(A)\rightarrow K^{sm}_{1}(A)$. Thus $kv_{1}(A)\rightarrow K^{sm}_{1}(A)$ is an isomorphism if $kv_{1}(A)\rightarrow kv_{1}(A^{\infty(I)})$ is an isomorphism. This completes the proof.
\end{proof}

Let $M$ be a monoid and $A[M]$ a monoid algebra over a locally convex algebra $A$. Each determining seminorm $\mu$ of $A$ induces in a natural way a seminorm $\mu_{M}$ on $A[M]$ as follows
$$
\mu_{M}(\sum a_{j}m_{j}) = \sum \mu(a_{j})
$$
for any element $\sum a_{j}m_{j}$ of $A[M]$. Then $A[M]$ becomes a topological algebra with respect to these induced seminorms. Regarding the topology on the monoid algebra $A[M]$ over a locally convex algebra $A$,  it can be considered as the union of the set $S$ of finite products of copies of $A$ indexed by finite subsets of $M$ and partially ordered by inclusion. Then we take on $A[M]$ the union topology induced by the topology of these finite products.

To confirm Karoubi's conjecture about the isomorphism of algebraic and topological $K$-functors we will use the following important notion of rings that is called the triple factorization property introduced in \cite{SuWo}. We recall its definition.

\begin{defn}
It is said that a ring $A$ possesses the property $(TF)_{right}$ if for any finite collection of elements $a_{1},a_{2}, \ldots, a_{m}$ of the ring $A$ there exist elements $b_{1},b_{2}, \ldots, b_{m}, c, d \in A$ such that $a_{i}=b_{i}cd$ for $1\leq i\leq m$ and the left annihilators in $A$ of $c$ and $cd$ are equal.
\end{defn}

\begin{prop}\label{prop_3.5}
If a ring $A$ has the $(TF)_{right}$ property, then the monoid algebra $A[M]$ has also the $(TF)_{right}$ property for any monoid $M$.
\end{prop}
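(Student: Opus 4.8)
The plan is to reduce the triple factorization for $A[M]$ to that of $A$ by exploiting the coefficient-wise structure of the monoid algebra. First I would take an arbitrary finite collection $\alpha_{1},\dots,\alpha_{m}\in A[M]$ and write each as a finite sum $\alpha_{i}=\sum_{j}a_{ij}m_{ij}$ with $a_{ij}\in A$ and $m_{ij}\in M$. Since there are only finitely many terms in total, the set of all coefficients $\{a_{ij}\}$ is a finite subset of $A$, so the $(TF)_{right}$ property of $A$ furnishes elements $c,d\in A$ and, for each coefficient $a_{ij}$, an element $b_{ij}\in A$ with $a_{ij}=b_{ij}cd$, such that the left annihilators of $c$ and $cd$ in $A$ coincide.

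Next I would lift $c$ and $d$ to $A[M]$ by setting $C=c\cdot e$ and $D=d\cdot e$, where $e$ denotes the identity of $M$, and define $\beta_{i}=\sum_{j}b_{ij}m_{ij}$. Using the multiplication rule $(am)(a'm')=(aa')(mm')$ in $A[M]$, one computes $CD=(cd)\cdot e$ and then $\beta_{i}CD=\sum_{j}(b_{ij}cd)m_{ij}=\sum_{j}a_{ij}m_{ij}=\alpha_{i}$, because multiplying on the right by an element supported at $e$ acts only on the coefficients and leaves the monoid elements $m_{ij}$ untouched. This yields the required factorization $\alpha_{i}=\beta_{i}CD$ for every $i$.

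It remains to verify the annihilator condition, which is the crux of the argument. For $\xi=\sum_{k}x_{k}k\in A[M]$ one has $\xi C=\sum_{k}(x_{k}c)k$ and $\xi(CD)=\sum_{k}(x_{k}cd)k$. Since distinct monoid elements are linearly independent over $A$ in $A[M]$, the product $\xi C$ vanishes precisely when $x_{k}c=0$ for every $k$, and $\xi(CD)$ vanishes precisely when $x_{k}cd=0$ for every $k$. The equality of the left annihilators of $c$ and $cd$ in $A$ then forces these two conditions to be equivalent, so the left annihilators of $C$ and $CD$ in $A[M]$ are equal, establishing $(TF)_{right}$ for $A[M]$.

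The main obstacle is the annihilator condition rather than the factorization: one must check that passing to left annihilators in the monoid algebra reduces coefficient-by-coefficient to passing to annihilators in $A$. This reduction works only because $C$ and $CD$ are supported at the identity $e$, so that right multiplication by them preserves the $M$-grading and acts diagonally on coefficients; the factorization step itself is then routine bookkeeping of the same observation.
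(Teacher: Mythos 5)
Your proposal is correct and follows essentially the same route as the paper: apply the $(TF)_{right}$ property of $A$ to the finite set of all coefficients appearing in the given elements of $A[M]$, reassemble the $b_{ij}$ into elements of $A[M]$, and check the annihilator condition coefficient-by-coefficient using the freeness of $A[M]$ over $A$. Your version is in fact slightly more careful than the paper's, since you make explicit that $c$ and $d$ must be embedded into $A[M]$ as $c\cdot e$ and $d\cdot e$ (the paper treats them as elements of $A[M]$ implicitly).
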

\begin{proof}
Let
$$
\begin{matrix}
p_{1} = a_{11}m_{01} + a_{12}m_{11} + \cdots + a_{1n_{1}}m_{1n_{1}}\\
p_{2} = a_{21}m_{21} + a_{22}m_{22} + \cdots + a_{2n_{2}}m_{2n_{2}}\\
.............................................................\\
p_{k} = a_{k1}m_{k2} + a_{k2}m_{k2} + \cdots + a_{kn_{k}}m_{kn_{k}}
\end{matrix}
$$
be elements of the monoid algebra $A[M]$.The $(TF)_{right}$ property of the ring $A$ implies that for the elements
$$
a_{11}, \ldots , a_{1n_{1}}, a_{21}, \ldots, a_{2n_{2}}, \ldots, a_{k1}, \ldots, a_{kn_{k}}
$$
of the ring $A$ there exist elements
$$
b_{11},  \ldots, b_{1n_{1}}, b_{22}, \ldots, b_{2n_{2}}, \ldots, b_{k1}, \ldots, b_{kn_{k}}
$$
and $c,d$ of $A$ such that
$$
\begin{matrix}
a_{1j}=b_{1j}cd,  j=1, \ldots , n_{1}\\
a_{2j}=b_{2j}cd,  j=1, \ldots , n_{2}\\
......................................\\
a_{kj}=b_{kj}cd,  j=1, \ldots , n_{k}
\end{matrix}
$$
and if $ycd=0$ then $yc=0$ for $y\in A$.

Now consider the following elements of the monoid algebra $A[M]$:
$$
\begin{matrix}
q_{1} = b_{11}m_{01} + b_{12}m_{11} + \cdots + b_{1n_{1}}m_{1n_{1}}\\
q_{2} = b_{21}m_{21} + b_{22}m_{22} + \cdots + b_{2n_{2}}m_{2n_{2}}\\
............................................................\\
q_{k} = b_{k1}m_{k2} + b_{k2}m_{k2} + \cdots + b_{kn_{k}}m_{kn_{k}}
\end{matrix}
$$

It is clear that one has $p_{i} = q_{i}cd$ for $1\leq i\leq k$. If $pcd=0$ for some monoid algebra
$p = a_{1}m_{1} + a_{2}m_{2} + \cdots + a_{n}m_{n}$, then $a_{j}cd = 0$ for $0\leq j\leq n$. Thus one gets $a_{j}c = 0,  0\leq j\leq n $, implying $pc = 0$. Therefore the monoid algebra $A[M]$ has the $(TF)_{right}$ property.
\end{proof}

\;

This proposition generalizes Lemma 16 \cite{In}. Therefore the polynomial algebra $A[x_{1},x_{2}, \ldots, x_{m}]$, $n\geq 1$, and the Laurent polynomial algebra $A[t,t_{1}]$ over a Frechet algebra $A$ with properly uniformly bounded approximate unit possess the excision property in algebraic $K$-theory and the $H$-unitality property.

In what follows the following property of smooth maps will be used
\begin{equation}\label{Nom1}
(A[M])^{\infty(I)} \approx (A^{\infty(I)})[M]
\end{equation}
for any locally convex algebra algebra $A$. It is clear that this condition implies the isomorphism $\mathfrak{I}(A[M]) \approx (\mathfrak{I}(A))[M]$ too.

Let $M$ be a monoid and denote by $\mathbf{ALC}[M]$ the category of monoid algebras $A[M]$ over locally convex algebras $A$ and by $\mathds{C*}$ the category of $C*$-algebras. Let $T$ be an arbitrary functor T from the category $\mathbf{ALC}[M]$ to the category $\mathbf{Ab}$ of abelian groups.

\begin{thm}\label{th_3.6}
If the functor
$$
T((A\widehat\otimes(-\otimes\mathcal{K}))[M]): \mathds{C*} \rightarrow \mathbf{Ab}
$$
is a stable and split exact functor, then the functor
$$
T((-\widehat\otimes\mathcal{K})[M]): \mathbf{ALC} \rightarrow \mathbf{Ab}
$$
is a smooth homotopy functor.
\end{thm}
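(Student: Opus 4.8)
The plan is to reduce the assertion, via Proposition~\ref{Prop_2.6} and the tensor identities, to a homotopy-invariance statement on a stabilized monoid algebra, and then to run the split-exactness/stability mechanism of Higson in the smooth setting. Throughout write $G(-)\defeq T((-\widehat\otimes\mathcal K)[M])\colon\mathbf{ALC}\to\mathbf{Ab}$. Since $\mathbf{ALC}$ is closed under $A\mapsto A^{\infty(I)}$, Proposition~\ref{Prop_2.6} reduces the theorem to proving that the constant inclusion $i\colon A\to A^{\infty(I)}$ induces an isomorphism $G(i)\colon G(A)\to G(A^{\infty(I)})$.

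First I would transport the smooth path inside the stabilization. By Lemma~\ref{lem_3.1}, $A^{\infty(I)}\widehat\otimes\mathcal K\approx(A\widehat\otimes\mathcal K)^{\infty(I)}$, and by~\eqref{Nom1} the monoid-algebra functor commutes with $(-)^{\infty(I)}$; hence, writing $B\defeq(A\widehat\otimes\mathcal K)[M]$,
\[
 G(A^{\infty(I)})=T\bigl((A^{\infty(I)}\widehat\otimes\mathcal K)[M]\bigr)\approx T\bigl(((A\widehat\otimes\mathcal K)^{\infty(I)})[M]\bigr)\approx T(B^{\infty(I)}),
\]
and under these identifications $G(i)$ becomes $T$ of the constant inclusion $\iota\colon B\to B^{\infty(I)}$. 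Thus it suffices to show that $\iota$ induces an isomorphism $T(B)\to T(B^{\infty(I)})$; equivalently, that the two smooth evaluations $\varepsilon_0,\varepsilon_1\colon B^{\infty(I)}\to B$ induce the same homomorphism.

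The heart of the argument is to force $T(\iota)$ to be onto (it is automatically split injective, with retraction $\varepsilon_0$). The hypothesis is precisely that the $C^*$-restriction $D\mapsto T((A\widehat\otimes(D\otimes\mathcal K))[M])$ is stable and split exact, and these are exactly the inputs of the classical theorem of Higson that a stable, split-exact functor is homotopy invariant. Split exactness applied to the matrix and $\mathcal K$-amplifications yields the additivity of $T$ on orthogonal (corner) homomorphisms -- the quasihomomorphism calculus of Cuntz--Higson -- while $\mathcal K$-stability provides the isomorphism $B\widehat\otimes\mathcal K\approx B$ (using $\mathcal K\widehat\otimes\mathcal K\approx\mathcal K$) and hence the room to implement the contracting homotopy of the ideal $\mathfrak I(B)=\ker\varepsilon_0$ -- smoothly contractible through $\delta_B$, exactly as in the proof of Theorem~\ref{th_3.3} -- by conjugation by a path of multipliers. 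Since such conjugations act trivially once the functor is stable and additive, the two evaluations on $\mathfrak I(B)$ agree, so $T(\mathfrak I(B))=0$ and $T(\iota)$ is an isomorphism.

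The main obstacle I anticipate is carrying Higson's quasihomomorphism argument, classically phrased for $C^*$-algebras and continuous homotopy, over to locally convex algebras with \emph{smooth} homotopy. One cannot simply pass through continuous homotopy invariance of the $C^*$-restriction, because the continuous analogue of Lemma~\ref{lem_3.1} fails (see the Remark following it), so the whole reduction must stay in the smooth world and use Lemma~\ref{lem_3.1} and~\eqref{Nom1} essentially. The delicate point is that stability and split exactness are assumed only for the $C^*$-restriction; this is enough precisely because every amplification and conjugation appearing in Higson's argument is $C^*$-directional, i.e.\ obtained by tensoring the fixed coefficient algebra $A$ with the $C^*$-algebras $\mathcal K$ and $M_n(\mathbb C)$ and their multiplier algebras, so that the needed instances of additivity and stability are exactly the ones the hypothesis supplies. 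A final routine check is that $\mathcal K\widehat\otimes\mathcal K\approx\mathcal K$ and the identities of Lemma~\ref{lem_3.1} and~\eqref{Nom1} remain valid after passing to the monoid algebra with its union topology, so that $B\widehat\otimes\mathcal K\approx B$ and the swindle is available.
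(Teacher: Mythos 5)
Your reduction via Proposition~\ref{Prop_2.6}, Lemma~\ref{lem_3.1} and~\eqref{Nom1} to the statement that the two evaluations $\varepsilon_0,\varepsilon_1\colon B^{\infty(I)}\to B$ induce the same map is fine and matches the paper's first step. But from there your proposal goes off in a direction that leaves a genuine gap, and it does so because of a misreading of where Lemma~\ref{lem_3.1} is needed. You assert that ``one cannot simply pass through continuous homotopy invariance of the $C^*$-restriction, because the continuous analogue of Lemma~\ref{lem_3.1} fails.'' That is exactly the route the paper takes, and it does not require any continuous analogue of Lemma~\ref{lem_3.1}. The point is that Lemma~\ref{lem_3.1} (in its \emph{smooth} form, which is all that is available and all that is used) lets you rewrite $A^{\infty(I)}\widehat\otimes\mathcal K\approx A\widehat\otimes\bigl(C^\infty(I)\widehat\otimes\mathcal K\bigr)$, i.e.\ it moves the smooth path algebra entirely into the \emph{variable} slot of the functor $D\mapsto T((A\widehat\otimes(D\otimes\mathcal K))[M])$. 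The smooth evaluations then factor through the inclusion $C^\infty(I)\hookrightarrow C(I)$ into a genuine $C^*$-algebra, and Higson's homotopy invariance theorem --- applied to the stable, split exact $C^*$-restriction, which is precisely the hypothesis --- forces the two continuous evaluations $C(I)\otimes\mathcal K\rightrightarrows\mathcal K$ to induce equal maps; the two smooth evaluations are their restrictions, so they induce equal maps as well. No identification of $A^{I}\widehat\otimes\mathcal K$ with $(A\widehat\otimes\mathcal K)^{I}$ is ever needed, because the continuous path algebra only ever appears in the $C^*$-variable.

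Having rejected that route, what you propose instead is to rerun Higson's quasihomomorphism calculus directly in the smooth locally convex setting to prove $T(\mathfrak I(B))=0$. This is not carried out --- you yourself flag it as ``the main obstacle'' --- and it cannot be carried out from the stated hypothesis alone: stability and split exactness are assumed only for $C^*$-algebra arguments $D$, whereas the objects your argument must handle ($\mathfrak I(B)$, smooth paths of multipliers, the ideal $\ker\varepsilon_0$ of $C^\infty(I)$) are not of the form $(A\widehat\otimes(D\otimes\mathcal K))[M]$ with $D$ a $C^*$-algebra, so the needed instances of additivity and stability are not supplied. Your closing claim that ``every amplification and conjugation appearing in Higson's argument is $C^*$-directional'' is precisely what fails once the homotopies are required to be smooth rather than continuous. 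The fix is to abandon the quasihomomorphism rerun and use the factorization through $C(I)$ described above.
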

\begin{proof}
The Higson homotopy invariance theorem \cite{Hi}, which is true for complex (or real) $C*$-algebras, implies that the functor
$$
T((A\widehat\otimes(-\bigotimes\mathcal{K}))[M])):\mathds{C*} \rightarrow \mathbf{Ab}
$$
is homotopy invariant.

By using Lemma \ref{lem_3.1} one has the following commutative diagram
$$
\xymatrix{
T\Big(\big((A^{\infty(I)}\widehat\otimes k)\widehat\otimes K \big) [M] \Big) \ar[d]_{\approx}\ar@<0.4ex>[r]\ar@<-0.4mm>[r] & T\Big(\big((A \widehat\otimes k)\widehat\otimes K \big) [M] \Big) \ar[d]^{\approx}\\
T\Big(\big(A\widehat\otimes (k^{\infty(I)} \widehat\otimes K) \big) [M] \Big) \ar[d]\ar@<0.4ex>[r]\ar@<-0.4mm>[r] & T\Big(\big(A\widehat\otimes (k\otimes K) \big) [M] \Big) \ar[d]\\
T\Big(\big(A\widehat\otimes(k^{I^n} \otimes K) \big) [M] \Big) \ar@<0.4ex>[r]\ar@<-0.4mm>[r] & T\Big(\big(A\widehat\otimes (k\otimes K) \big) [M] \Big)
}
$$

where horizontal maps of this diagram are induced by evaluation maps. The homotopy invariance of the functor $T((A\widehat\otimes(-\otimes\mathcal{K}))[M])$ implies the equality of the bottom two horizontal homomorphisms and therefore the equality of the top two horizontal homomorphisms.This shows the smooth homotopy invariance of the functor $T((-\widehat\otimes\mathcal{K})[M])$.This completes the proof.
\end{proof}

\begin{defn}
A locally convex algebra $B$ is called quasi-stable if it has the form $A {\widehat \otimes} \mathcal{K}$ for some locally convex algebra $A$. where $\mathcal{K}$ is the $C^{*}$-algebra of compact operators on the infinite dimensional Hilbert space $\mathcal{H}$.
\end{defn}

\begin{thm}\label{th_3.8}
For any locally convex algebra $A$ there is an isomorphism
$$
kv_{n}((A\widehat\otimes\mathcal{K})[M])\rightarrow K^{sm}_{n}((A\widehat\otimes\mathcal{K})[M])
$$
for all $n\geq 1$.
\end{thm}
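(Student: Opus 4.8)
The plan is to show that the natural transformation $\delta_{*}\colon kv_{n}\to K^{sm}_{n}$, induced on homotopy groups by the morphism of cotriples $\delta\colon\mathfrak{P}\to\mathfrak{I}$, is an isomorphism on monoid algebras over quasi-stable locally convex algebras. Since $K^{sm}_{n}$ is already a smooth homotopy functor, the whole argument rests on two pillars: first, showing that the stabilized Karoubi-Villamayor functor $B\mapsto kv_{n}((B\widehat\otimes\mathcal{K})[M])$ is itself a smooth homotopy functor; and second, propagating the degree-one comparison of Theorem \ref{th_3.3} to all degrees by a dimension shift through the smooth path cotriple. I would treat $n=1$ as the base case and induct downward in degree.

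For the smooth homotopy invariance of $kv_{n}$ after quasi-stabilization I would invoke Theorem \ref{th_3.6} with $T=kv_{n}$, which reduces the claim to verifying that $D\mapsto kv_{n}((A\widehat\otimes(D\otimes\mathcal{K}))[M])$ is stable and split exact on $\mathds{C*}$. Stability is essentially free: since $\mathcal{K}\widehat\otimes\mathcal{K}\cong\mathcal{K}$, replacing $D$ by $D\otimes\mathcal{K}$ leaves the inner algebra unchanged up to isomorphism, so the functor is automatically invariant under the corner embedding. Split exactness is the substantive point. Given a split exact sequence of $C^{*}$-algebras, the projective tensor product preserves its proper split exactness and the monoid construction is additive, so one obtains a split exact sequence of topological algebras whose ideal $A\widehat\otimes(D'\otimes\mathcal{K})$ carries a stable $C^{*}$-factor and hence a bounded approximate unit; by Proposition \ref{prop_3.5} and Theorem \ref{th_2.9} its monoid algebra is $H$-unital and satisfies excision in algebraic $K$-theory. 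Excision identifies the relative term with $kv_{n}$ of the ideal, and the splitting then breaks the long exact sequence into the split short exact sequence $0\to kv_{n}(\text{ideal})\to kv_{n}(\text{total})\to kv_{n}(\text{quotient})\to 0$, which is split exactness. Theorem \ref{th_3.6} then delivers the desired smooth homotopy invariance.

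With both $kv_{n}((-\widehat\otimes\mathcal{K})[M])$ and $K^{sm}_{n}((-\widehat\otimes\mathcal{K})[M])$ now known to be smooth homotopy functors, I would run the comparison inductively. For $n=1$, Theorem \ref{th_3.3}(2) applies verbatim and gives $kv_{1}((A\widehat\otimes\mathcal{K})[M])\approx K^{sm}_{1}((A\widehat\otimes\mathcal{K})[M])$. For the inductive step I would use the smooth path fibration $0\to\ker\tau_{A}\to\mathfrak{I}(A)\to A\to 0$: because the zero and identity maps of $\mathfrak{I}(A)$ are smoothly homotopic via $\delta_{A}$, the algebra $\mathfrak{I}(A)$ is contractible, so smooth homotopy invariance forces both $kv_{n}$ and $K^{sm}_{n}$ to vanish on it and yields connecting isomorphisms shifting degree $n$ to degree $n-1$ on the loop algebra $\ker\tau_{A}$. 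The crucial compatibility is that these loop constructions stay inside the category already handled: by Lemma \ref{lem_3.1}, its Remark (3), and the identity \eqref{Nom1}, one has $\mathfrak{I}((A\widehat\otimes\mathcal{K})[M])\approx(\mathfrak{I}(A)\widehat\otimes\mathcal{K})[M]$, so the shifted algebra is again a monoid algebra over a quasi-stable locally convex algebra. The inductive hypothesis then applies and $\delta_{*}$ is an isomorphism in degree $n$.

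The main obstacle is the split exactness input on $\mathds{C*}$: unlike topological $K$-theory, $kv_{n}$ is not split exact for general rings, so one genuinely needs the excision and $H$-unitality of Theorem \ref{th_2.9}, transported to monoid algebras by Proposition \ref{prop_3.5}, and one must confirm that the stable $\mathcal{K}$-factor really does supply the approximate unit that these results require even when $A$ is an arbitrary locally convex algebra. A secondary difficulty is making the dimension shift functorial in $n$: one must check that the connecting isomorphisms for $kv$ and for $K^{sm}$ are compatible with $\delta_{*}$, so that the single natural transformation is simultaneously an isomorphism in every degree rather than merely an abstract degreewise match.
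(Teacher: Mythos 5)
Your overall skeleton matches the paper's: reduce to smooth homotopy invariance of the stabilized $kv_{n}$ via Theorem \ref{th_3.6}, then dimension-shift through the smooth path fibration $0\to\Omega_{sm}(A)\to\mathfrak{I}(A)\to A\to 0$ using contractibility of $\mathfrak{I}(A)$ and the compatibility $\mathfrak{I}((A\widehat\otimes\mathcal{K})[M])\approx(\mathfrak{I}(A)\widehat\otimes\mathcal{K})[M]$, and close with Theorem \ref{th_3.3}(2) in degree one. But your verification of the split-exactness hypothesis of Theorem \ref{th_3.6} goes through excision and $H$-unitality via Theorem \ref{th_2.9} and Proposition \ref{prop_3.5}, and that route does not work here. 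Theorem \ref{th_2.9} applies to Fr\'echet algebras with a properly uniformly bounded approximate unit; in Theorem \ref{th_3.8} the algebra $A$ is an arbitrary locally convex algebra, so $A\widehat\otimes(C_{1}\otimes\mathcal{K})$ need not be Fr\'echet and the stable $\mathcal{K}$-factor does not rescue the hypotheses --- this is exactly the obstacle you flag at the end but do not resolve. Moreover, the excision of Theorem \ref{th_2.9} is excision in Quillen $K$-theory, not in Karoubi--Villamayor theory, so it would not identify the relative $kv$-term even if the hypotheses held. The paper avoids all of this: a split surjection of rings is a $GL$-fibration, so the Karoubi--Villamayor functors preserve split exact sequences of arbitrary rings purely algebraically, and the split short exact sequence of $kv_{n}$-groups falls out of the $GL$-fibration long exact sequence with no analytic input. (The excision/$H$-unitality machinery is what the paper needs later, in Theorem \ref{th_3.9}, precisely because Quillen's $K_{n}$ is not split exact in general --- the situation is the reverse of what you assert.)

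A second, smaller gap is your treatment of stability. That $\mathcal{K}\otimes\mathcal{K}\cong\mathcal{K}$ makes the source and target of the corner embedding abstractly isomorphic does not show that the \emph{canonical} map $D\to D\otimes\mathcal{K}$, $d\mapsto d\otimes e_{11}$, induces an isomorphism on $kv_{n}((A\widehat\otimes(-\otimes\mathcal{K}))[M])$, which is what Higson-type stability requires. The paper proves this by composing the induced map with natural isomorphisms that rewrite $(D\otimes\mathcal{K})\otimes\mathcal{K}$ in terms of $M_{2}$ of the original algebra, reducing the claim to the Morita-type isomorphism $kv_{n}(C)\to kv_{n}(M_{2}(C))$ valid for any ring $C$. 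You should supply this reduction rather than declare stability ``essentially free.'' The remainder of your argument --- the vanishing of both theories on $(\mathfrak{I}(A)\widehat\otimes\mathcal{K})[M]$, the resulting looping isomorphisms, and the naturality of $\delta_{*}$ with respect to the connecting maps --- is in line with the paper.
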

\begin{proof}
First it will be shown that $kv_{n}((-\widehat\otimes\mathcal{K})[M])$, $n\geq 1$, is a smooth homotopy functor on the category of locally convex algebras. According to Theorem \ref{th_3.6} it suffices to prove that the functor $kv_{n}((A\widehat \otimes(-\otimes \mathcal{K}))[M]$, $n\geq 1$, is stable and split exact on the category of C*-algebras for any locally convex algebra $A$.

Let
\begin{equation}\label{Nom3}
0\rightarrow C_{1}\rightarrow C\rightarrow C_{2}\rightarrow 0
\end{equation}
be a short split exact sequence of C*-algebras. Then the sequence
\begin{equation}\label{Nom4}
0\rightarrow (A\widehat\otimes(C_{1}\otimes\mathcal{K}))[M]\rightarrow (A\widehat\otimes(C\otimes\mathcal{K}))[M]\rightarrow (A\widehat \otimes(C_{2}\otimes\mathcal{K}))[M]\rightarrow 0
\end{equation}
is also a short split exact sequence for any locally convex algebra $A$. Since Karoubi-Villamayor algebraic K-functors preserve short split exact exactness of rings, it follows that
the sequence
\begin{multline*}
0\rightarrow kv_{n}((A \widehat\otimes(C_{1} \otimes\mathcal{K}))[M])\rightarrow kv_{n}((A\widehat\otimes(C\otimes\mathcal{K}))[M])\\
\rightarrow kv_{n}((A\widehat\otimes(C_{2}\otimes\mathcal{K}))[M])\rightarrow 0
\end{multline*}
is a short split exact sequence of abelian groups.

Let $D$ be a C*-algebra. Then for any locally convex algebra $A$ the canonical homomorphism $D\rightarrow D\otimes\mathcal{K}$ induces the homomorphism
\begin{equation}\label{Nom5}
kv_{n}((A\widehat\otimes(D\otimes \mathcal{K}))[M])\rightarrow kv_{n}(A\widehat\otimes((D\otimes\mathcal{K})\otimes\mathcal{K})[M]).
\end{equation}
On the other hand one has natural isomorphisms
\begin{align*}
&kv_{n}(A\widehat\otimes((D\otimes\mathcal{K})\otimes\mathcal{K})[M])\approx kv_{n}((A\widehat\otimes (D\otimes M_{2}(\mathcal{K}))[M])\\
&\approx kv_{n}((A\widehat\otimes M_{2}(D\otimes\mathcal{K}))[M])\approx kv_{n}((M_{2}(A\widehat\otimes(D\otimes\mathcal{K}))[M])\\
&\approx kv_{n}(M_{2}((A\widehat\otimes (D\otimes\mathcal{K}))[M]).
\end{align*}
and the composite of the induced homomorphism (\ref{Nom5}) with theses isomorphisms gives us the natural homomorphism
$$
kv_{n}((A\widehat\otimes(D\otimes \mathcal{K}))[M])\rightarrow kv_{n}(M_{2}((A\widehat\otimes (D\otimes\mathcal{K}))[M]).
$$
It is well known \cite{Hi} that for any ring $C$ one has the isomorphism
$$
kv_{1}(C)\rightarrow kv_{1}(M_{2}(C))
$$
implying this isomorphism for all K-functors $kv_{n}$, $n\geq 1$.
Therefore the homomorphism
$$
kv_{n}((A\widehat\otimes(D\otimes\mathcal{K}))[M])\rightarrow kv_{n}(M_{2}((A\widehat\otimes(D\otimes \mathcal{K}))[M])
$$
is an isomorphism for all $n\geq 1$ and we conclude that the induced homomorphism (\ref{Nom5}) is an isomorphism for all $n\geq 1$. Thus by Theorem \ref{th_3.6} the functor $kv_{n}((-\widehat \otimes \mathfrak{K})[M])$, for all $n\geq 1$, is a smooth homotopy functor on the category of locally convex algebras. Taking into account Lemma \ref{lem_3.1} we finally conclude that for any monoid $M$
(satisfying (\ref{Nom1})) the functor $kv_{n}$ is a smooth homotopy functor on the category of monoid algebras over quasi-stable locally convex algebras for all $n\geq 1$.

The exact sequence
$$
0\rightarrow \Omega_{sm}(A)\rightarrow\mathfrak{I}(A)\rightarrow A\rightarrow 0
$$
induces the exact sequence
\begin{equation}\label{Nom2}
0\rightarrow (\Omega_{sm}(A)\widehat\otimes\mathcal{K})[M]\rightarrow (\mathfrak{I}(A)\widehat\otimes\mathcal{K})[M]\rightarrow (A \widehat \otimes\mathcal{K})[M]\rightarrow 0.
\end{equation}
Since $\mathfrak{I}(A)\widehat\otimes\mathcal{K})[M]$ is isomorphic to $\mathfrak{I}(A\widehat\otimes\mathcal{K})[M])$, this sequence is a $GL$-fibration with respect to the smooth cotriple and therefore with respect to the polynomial cotriple. Thus the sequence (\ref{Nom5}) induces the following long exact sequences
\begin{align*}
&\cdots \rightarrow K^{sm}_{n+1}(A\widehat\otimes\mathcal{K})[M])\rightarrow K^{sm}_{n}(\Omega_{sm}(A)\widehat\otimes\mathcal{K})[M])\rightarrow K^{sm}_{n}(\mathfrak{I}(A)\widehat\otimes\mathcal{K})[M])\\
&\rightarrow K^{sm}_{n}(A\widehat\otimes\mathcal{K})[M])\rightarrow K^{sm}_{n-1}(\Omega_{sm}(A)\widehat\otimes \mathcal{K})[M])\rightarrow \cdots,
\end{align*}
\begin{align*}
&\cdots\rightarrow kv_{n+1}(A\widehat\otimes\mathcal{K})[M])\rightarrow kv_{n}(\Omega_{sm}(A)\widehat\otimes \mathcal{K})[M])\rightarrow kv_{n}(\mathfrak{I}(A)\widehat \otimes\mathcal{K})[M])\\
&\rightarrow kv_{n}(A\widehat\otimes\mathcal{K})[M])\rightarrow kv_{n-1}(\Omega_{sm}(A)\widehat\otimes\mathcal{K})[M])\rightarrow \cdots .
\end{align*}
As we have seen the locally convex algebra $\mathfrak{I}(A)$ is smoothly contractible. This imply the contractibility of $(\mathfrak{I}(A) \widehat \otimes\mathcal{K})[M]$. Since $K^{sm}_{n}$ and $kv_{n}$ are smooth homotopy functors, one gets the equalities $K^{sm}_{n}(\mathfrak{I}(A) \widehat \otimes\mathcal{K})[M])= 0$ and $kv_{n}(\mathfrak{I}(A) \widehat \otimes\mathcal{K})[M])= 0$ for all $n\geq 1$.

By using the above long exact sequences one has isomorphisms
\begin{align*}
& K^{sm}_{n}(A\widehat\otimes\mathcal{K})[M])\approx K^{sm}_{1}(\Omega^{n-1}_{sm}(A)\widehat\otimes \mathcal{K})[M]),\\
& kv_{n}(A\widehat\otimes\mathcal{K})[M])\approx kv_{1}(\Omega^{n-1}_{sm}(A)\widehat\otimes\mathcal{K})[M])
\end{align*}
for all $n>1$, where $\Omega^{n}_{sm}(A)= \Omega^{1}_{sm}(\Omega^{n-1}_{sm}(A))$.

By Theorem \ref{th_3.3} the abelian groups $K^{sm}_{1}(\Omega^{n-1}_{sm}(A) \widehat\otimes\mathcal{K})[M])$ and \break $kv_{1}(\Omega^{n-1}_{sm}(A)\widehat \otimes \mathcal{K})[M])$ are isomorphic and finally we obtain the required isomorphism:
$$
K^{sm}_{n}((A\widehat\otimes\mathcal{K})[M])\approx kv_{n}((A \widehat\otimes\mathcal{K})[M])
$$
for all $n\geq 1$. This completes the proof.
\end{proof}

Theorem \ref{th_3.8} generalizes Higson's result \cite{Hi} on the isomorphism of Karoubi-Villamayor algebraic K-functors and topological K-functors for stable $C\star$-algebras, since in this case the smooth K-functors are isomorphic to topological K-functors (see Theorem 1.10, \cite{InKa1})

\begin{thm}\label{th_3.9}
For any Frechet algebra $A$ with properly uniformly bounded approximate unit there is an
isomorphism
$$
K_{n}((A\widehat\otimes\mathcal{K})[M])\rightarrow K^{sm}_{n}((A\widehat\otimes\mathcal{K})[M])
$$
for all $n\geq 1$.
\end{thm}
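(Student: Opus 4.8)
The plan is to run the argument of Theorem~\ref{th_3.8} essentially verbatim, with the Karoubi--Villamayor functor $kv_n$ replaced throughout by the Quillen functor $K_n$, and to isolate the single place where the extra hypothesis on $A$ is genuinely consumed. Since Theorem~\ref{th_3.8} already provides an isomorphism $kv_n((A\widehat\otimes\mathcal{K})[M])\approx K^{sm}_n((A\widehat\otimes\mathcal{K})[M])$, and the comparison maps of the introduction factor as $K_n\to kv_n\to K^{sm}_n$, it would suffice to prove that $K_n((-\widehat\otimes\mathcal{K})[M])$ is a smooth homotopy functor on the category of Frechet algebras with properly uniformly bounded approximate unit; the remaining bookkeeping is then identical to that of Theorem~\ref{th_3.8}, and the induced map $K_n\to K^{sm}_n$ is the isomorphism sought.

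First I would reduce smooth homotopy invariance to a statement about $C^*$-algebras via Theorem~\ref{th_3.6}, applied with $T=K_n$: it is enough to show that $K_n((A\widehat\otimes(-\otimes\mathcal{K}))[M])$ is stable and split exact on the category of $C^*$-algebras for every admissible $A$. For the stability step I would repeat the chain of natural isomorphisms used for $kv_n$, invoking the Morita (i.e.\ $M_2$-) invariance $K_n(R)\approx K_n(M_2(R))$ of Quillen $K$-theory together with the identifications $\mathcal{K}\otimes\mathcal{K}\approx M_2(\mathcal{K})$, so that the corner inclusion $D\to D\otimes\mathcal{K}$ is carried onto the Morita isomorphism and hence induces an isomorphism; Lemma~\ref{lem_3.1} and \eqref{Nom1} then transport this to the monoid-algebra level.

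The decisive difference from Theorem~\ref{th_3.8}, and what I expect to be the main obstacle, is that for Quillen $K$-theory neither split exactness nor the passage to long exact sequences for the non-unital monoid algebras involved is automatic: both rest on \textbf{excision}, and this is exactly where the hypothesis enters. By Theorem~\ref{th_2.9} a Frechet algebra with properly uniformly bounded approximate unit has the TF-property, hence excision and $H$-unitality; by Proposition~\ref{prop_3.5} the TF-property passes to the monoid algebra, and since $\mathcal{K}$ (and each $C\otimes\mathcal{K}$) is a $C^*$-algebra, the finite projective tensor product $A\widehat\otimes(C\otimes\mathcal{K})$ again has properly uniformly bounded approximate unit, so that $(A\widehat\otimes(C\otimes\mathcal{K}))[M]$ enjoys excision in algebraic $K$-theory. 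Excision is precisely what identifies the relative term of the split exact sequence \eqref{Nom4} with the Quillen $K$-theory of the ideal and thus yields split exactness of the functor; checking that the Suslin--Wodzicki excision machinery applies uniformly to all the monoid algebras in play is the technical heart of the argument, and it is the reason the present theorem requires the approximate-unit hypothesis that was absent from Theorem~\ref{th_3.8}.

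Granting smooth homotopy invariance of $K_n((-\widehat\otimes\mathcal{K})[M])$, the conclusion follows as in Theorem~\ref{th_3.8}. For $n=1$, Theorem~\ref{th_3.3}(1) gives $K_1((A\widehat\otimes\mathcal{K})[M])\approx K^{sm}_1((A\widehat\otimes\mathcal{K})[M])$. For $n>1$ I would tensor the path fibration $0\to\Omega_{sm}(A)\to\mathfrak{I}(A)\to A\to0$ with $\mathcal{K}$ and form the monoid algebra; excision turns this proper exact sequence into a long exact sequence in $K_*$, while the smooth contractibility of $\mathfrak{I}(A)$ forces $K_n((\mathfrak{I}(A)\widehat\otimes\mathcal{K})[M])=0$, producing the dimension shift $K_n((A\widehat\otimes\mathcal{K})[M])\approx K_1((\Omega^{n-1}_{sm}(A)\widehat\otimes\mathcal{K})[M])$. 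The identical shift holds for $K^{sm}_n$, so the case $n=1$ propagates to all $n\ge1$, completing the proof.
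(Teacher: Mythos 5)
Your proposal is correct and follows essentially the same route as the paper: reduce to stability and split exactness over $C^*$-algebras via Theorem~\ref{th_3.6}, with the properly uniformly bounded approximate unit entering exactly through Theorem~\ref{th_2.9} and Proposition~\ref{prop_3.5} to supply excision and $H$-unitality (hence Morita invariance and the long exact sequences), followed by the dimension shift along the path fibration and Theorem~\ref{th_3.3} at $n=1$. The paper likewise treats excision as the point where the hypothesis is consumed, so your identification of the ``technical heart'' matches its argument.
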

\begin{proof}
To prove that $K_{n}((A\widehat\otimes(-\otimes\mathcal{K}))[M])$, $n\geq 1$, is a
smooth homotopy functor on the category of Frechet algebras with
properly uniformly bounded approximate unit we will again use
Theorem \ref{th_3.6} by taking for the functor $T$ the Quillen algebraic K-functor $K_{n}$.
Consider the short exact sequences (\ref{Nom3}) and (\ref{Nom4}). If $A$ is a Frechet algebra  with
properly uniformly bounded approximate unit,
then so it is for the Frechet algebra $A\widehat\otimes(C_{1}\otimes\mathcal{K})$ and consequently by Propostion \ref{prop_3.5} and
Theorem \ref{th_2.9} the monoid algebra $(A\widehat\otimes(C_{1}\otimes\mathcal{K}))[M]$ has the excision property in algebraic K-theory and is H-unital.
Thus the split exact sequence (\ref{Nom3}) induces the split exact sequence
\begin{multline*}
0\rightarrow K_{n}((A \widehat\otimes(C_{1} \otimes\mathcal{K}))[M])\rightarrow K_{n}((A\widehat\otimes(C\otimes\mathcal{K}))[M])\\
\rightarrow K_{n}((A\widehat\otimes(C_{2}\otimes\mathcal{K}))[M])\rightarrow 0.
\end{multline*}
Since $(A\widehat\otimes(D\otimes\mathcal{K}))[M]$ is H-unital for any Frechet algebra $A$ with properly uniformly bounded approximate unit and any C*-algebra $D$, it has the Morita equivalence property implying the isomorphism
$$
K_{n}((A\widehat\otimes(D\otimes \mathcal{K}))[M])\rightarrow K_{n}(M_{2}((A\widehat\otimes (D\otimes\mathcal{K}))[M])
$$
induced by the canonical homomorphism $D\rightarrow D\otimes \mathcal{K}$.Therefore by Theorem \ref{th_3.6} the functor $K_{n}((-\widehat \otimes \mathcal{K})[M])$, for all $n\geq 1$, is a smooth homotopy functor on the category of Frechet algebras with properly uniformly bounded approximate unit. Applying now Lemma \ref{lem_3.1} we show that the Quillen algebraic K-functor $K_{n}$ is a smooth homotopy functor on the category of monoid algebras over quasi-stable Frechet algebras with properly uniformly bounded approximate unit for all $n\geq 1$.

If $A$ is a Frechet algebra with properly uniformly bounded approximate unit, then $\Omega_{sm}(A)\widehat\otimes\mathcal{K}$ is also a Frechet algebra with properly uniformly bounded approximate unit. Thus the algebra $(\Omega_{sm}(A)\widehat\otimes\mathcal{K})[M]$ has the excision property in algebraic K-theory. It follows that the short exact sequence (\ref{Nom2}) induces the following long exact sequence
\begin{align*}
&\cdots \rightarrow K_{n+1}((A\widehat\otimes\mathcal{K})[M])\rightarrow K_{n}(\Omega_{sm}(A)\widehat\otimes\mathcal{K})[M])\rightarrow K_{n}((\mathfrak{I}(A)\widehat\otimes\mathcal{K})[M])\\
&\rightarrow K_{n}((A\widehat\otimes\mathcal{K})[M])\rightarrow K_{n-1}((\Omega_{sm}((A\widehat\otimes\mathcal{K})[M])\rightarrow \cdots .
\end{align*}
Since $K_{n}$ is a smooth homotopy functor, one has the equality \break$K_{n}((\mathfrak{I}(A)\widehat\otimes\mathcal{K})[M])=0$ for $n\geq1$ which implies the isomorphism
$$
K_{n}((A\widehat\otimes\mathcal{K})[M])\approx K_{1}((\Omega^{n-1}_{sm}(A\widehat\otimes\mathcal{K})[M]),n>1.
$$
The group $K_{1}((\Omega^{n-1}_{sm}(A)\widehat\otimes\mathcal{K})[M])$ is isomorphic to the group \break$K^{sm}_{1}((\Omega^{n-1}_{sm}(A)\widehat\otimes\mathcal{K})[M])$
by Theorem \ref{th_3.3} and the composition of these two isomorphisms with the isomorphism
$$
 K^{sm}_{1}((\Omega^{n-1}_{sm}(A)\widehat\otimes\mathcal{K})[M])\approx K^{sm}_{n}((A\widehat\otimes\mathcal{K})[M])
$$
gives us the required isomorphism
$$
K^{sm}_{n}((A\widehat\otimes\mathcal{K})[M])\approx K_{n}((A \widehat\otimes\mathcal{K})[M])
$$
for all $n\geq 1$. This completes the proof.
\end{proof}

\begin{thm}\label{th_3.10}
If $A$ is a Frechet algebra  with properly uniformly bounded approximate unit, then one has isomorphisms
$$
K_{n}(A\widehat\otimes\mathcal{K})\approx K_{n}((A\widehat\otimes\mathcal{K})[x_{1},x_{2}, \ldots, x_{m}])
$$
for $n,m\geq 1$.
\end{thm}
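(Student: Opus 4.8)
The plan is to derive the statement from Theorem \ref{th_3.9} together with the smooth homotopy invariance of the functors $K^{sm}_{n}$. Write $B=A\widehat\otimes\mathcal{K}$ and observe that the polynomial algebra $B[x_{1},\ldots,x_{m}]$ is exactly the monoid algebra $B[\N^{m}]$ over the free commutative monoid $\N^{m}$, which satisfies (\ref{Nom1}); the trivial monoid likewise gives $B$ itself. Hence both algebras fall within the scope of Theorem \ref{th_3.9}, and applying that theorem twice yields isomorphisms $K_{n}(B)\approx K^{sm}_{n}(B)$ and $K_{n}(B[x_{1},\ldots,x_{m}])\approx K^{sm}_{n}(B[x_{1},\ldots,x_{m}])$ for all $n\geq1$. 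Thus it suffices to prove the smooth analogue of $K$-regularity, namely $K^{sm}_{n}(B)\approx K^{sm}_{n}(B[x_{1},\ldots,x_{m}])$ for $n\geq1$.

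To establish this I would show that the inclusion of constants $\iota\colon B\hookrightarrow B[x_{1},\ldots,x_{m}]$ is a smooth homotopy equivalence. Let $\varepsilon_{0}\colon B[x_{1},\ldots,x_{m}]\to B$ be the evaluation sending every $x_{i}$ to $0$; then $\varepsilon_{0}\iota=\id_{B}$, so it remains to check that $\iota\varepsilon_{0}$ induces the identity on $K^{sm}_{n}$. For this I use the scaling map
$$H\colon B[x_{1},\ldots,x_{m}]\to (B[x_{1},\ldots,x_{m}])^{\infty(I)},\qquad H\bigl(\textstyle\sum_{\alpha}b_{\alpha}x^{\alpha}\bigr)(t)=\sum_{\alpha}b_{\alpha}\,t^{|\alpha|}x^{\alpha},$$
where $x^{\alpha}=x_{1}^{\alpha_{1}}\cdots x_{m}^{\alpha_{m}}$ and $|\alpha|=\alpha_{1}+\cdots+\alpha_{m}$. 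For each fixed $t$ the substitution $x_{i}\mapsto t x_{i}$ is an algebra endomorphism, and the coefficient functions $t\mapsto t^{|\alpha|}$ are smooth, so $H$ is a continuous algebra homomorphism into $(B[x_{1},\ldots,x_{m}])^{\infty(I)}$, continuity being verified on each finite product piece of the union topology. By construction $\varepsilon_{0}H=\iota\varepsilon_{0}$ and $\varepsilon_{1}H=\id$, so $H$ is a smooth homotopy between $\iota\varepsilon_{0}$ and the identity.

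Since $K^{sm}_{n}$ is a smooth homotopy functor for $n\geq1$, the relation $K^{sm}_{n}(\iota\varepsilon_{0})=K^{sm}_{n}(\id)=\id$ together with $K^{sm}_{n}(\varepsilon_{0})K^{sm}_{n}(\iota)=\id$ exhibits $K^{sm}_{n}(\iota)$ and $K^{sm}_{n}(\varepsilon_{0})$ as mutually inverse isomorphisms. Concatenating the three isomorphisms gives
$$K_{n}(B)\approx K^{sm}_{n}(B)\approx K^{sm}_{n}(B[x_{1},\ldots,x_{m}])\approx K_{n}(B[x_{1},\ldots,x_{m}]),$$
and because the comparison map $K_{n}\to K^{sm}_{n}$ of Theorem \ref{th_3.9} is the functorial one induced by $\delta$, the resulting composite is precisely the isomorphism induced by the inclusion $B\hookrightarrow B[x_{1},\ldots,x_{m}]$. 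The only genuinely delicate point is the middle step: one must confirm that the scaling map is admissible as a smooth homotopy, i.e.\ that $H$ really takes values in $(B[x_{1},\ldots,x_{m}])^{\infty(I)}$ and is continuous for the union topology on monoid algebras; everything else is bookkeeping with Theorem \ref{th_3.9}. I expect no serious obstruction here, as the polynomial coefficients are manifestly smooth in $t$, and the hypothesis that $A$ carries a properly uniformly bounded approximate unit enters only through the invocation of Theorem \ref{th_3.9}. (Alternatively, one could route the middle step through $kv_{n}$, using the classical polynomial-homotopy invariance of Karoubi--Villamayor $K$-theory together with Theorem \ref{th_3.8}, but the smooth homotopy argument above is more self-contained.)
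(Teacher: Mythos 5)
Your proof is correct, but it routes the key middle step differently from the paper. The paper's proof is a short diagram chase: it forms the commutative square comparing $K_{n}$ and $kv_{n}$ on $B=A\widehat\otimes\mathcal{K}$ and $B[x_{1},\ldots,x_{m}]$, observes that the vertical comparison maps $K_{n}\to kv_{n}$ are isomorphisms by combining Theorems \ref{th_3.8} and \ref{th_3.9} (both of which factor through $K^{sm}_{n}$), and then invokes the classical, purely ring-theoretic fact that $kv_{n}(R)\to kv_{n}(R[x_{1},\ldots,x_{m}])$ is an isomorphism for an arbitrary ring $R$. You instead bypass $kv_{n}$ and Theorem \ref{th_3.8} entirely: you apply Theorem \ref{th_3.9} twice to reduce to $K^{sm}_{n}$ and then prove smooth polynomial homotopy invariance of $K^{sm}_{n}$ directly, exhibiting the scaling substitution $x_{i}\mapsto tx_{i}$ as an explicit smooth homotopy between $\iota\varepsilon_{0}$ and the identity. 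The alternative you mention parenthetically at the end is in fact the paper's actual proof. What the paper's route buys is that the homotopy-invariance input is purely algebraic, so there is nothing topological to check about the homotopy; what your route buys is independence from Theorem \ref{th_3.8} and the Karoubi--Villamayor machinery, at the cost of the verification you correctly flag as the delicate point. That verification does go through: for the induced seminorms $\mu_{M}$ of the paper one has $\mu_{M}(H(p)(t))\le\mu_{M}(p)$ for $t\in I$ and similar bounds for the $t$-derivatives (each polynomial has finitely many terms, so $H(p)$ is a polynomial curve in a fixed finite product piece), and $H$ is continuous on each piece of the union topology, hence on the colimit. Both arguments also tacitly use the naturality of the comparison maps $K_{n}\to K^{sm}_{n}$ (respectively $K_{n}\to kv_{n}$) to identify the resulting isomorphism with the one induced by the inclusion of constants, which is legitimate since the paper asserts these comparison homomorphisms are functorial.
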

\begin{proof}
Consider the following commutative diagram
$$
\xymatrix{
 K_{n}(A\widehat\otimes\mathcal{K}) \ar[d]\ar[r] & K_{n}((A\widehat\otimes\mathcal{K})[x_{1}, \ldots, x_{m}])\ar[d]\\
 kv_{n}(A\widehat\otimes\mathcal{K}) \ar[r] & kv_{n}((A\widehat\otimes\mathcal{K})[x_{1}, \ldots, x_{m}])\\
 }
$$
with natural homomorphisms. According to Theorems \ref{th_3.8} and \ref{th_3.9} the Quillen algebraic K-groups $K_{n}((A\widehat\otimes\mathcal{K})[M])$ are isomorphic to Karoubi-Villamayor algebraic K-groups $kv_{n}((A\widehat\otimes\mathcal{K})[M])$ for monoid algebras over quasi stable Frechet algebras with properly uniformly bounded approximate unit for $n\geq 1$. Therefore the vertical homomorphisms are isomorphisms. On the other hand the bottom homomorphism is an isomorphism for any ring. We conclude that the top homomorphism is an isomorphism too. This completes the proof.
\end{proof}

\begin{cor}
Let $A$ be a Frechet algebra with properly uniformly bounded approximate unit. Then one has isomorphisms
$$
K_{n}((A\widehat\otimes\mathcal{K})[t,t^{-1}]) \approx K_{n}(A\widehat\otimes\mathcal{K})\oplus K_{n-1}(A\widehat\otimes\mathcal{K})
$$
and
$$
K^{sm}_{n}((A\widehat\otimes\mathcal{K})[t,t^{-1}]) \approx K^{sm}_{n}(A\widehat\otimes\mathcal{K})\oplus K^{sm}_{n-1}(A\widehat\otimes\mathcal{K})
$$
for all $n\geq 1$.
\end{cor}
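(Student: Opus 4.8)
The plan is to reduce both isomorphisms to the classical Fundamental Theorem of algebraic $K$-theory, using the $K$-regularity already in hand. Write $R = A\widehat\otimes\mathcal{K}$. The crucial input is Theorem \ref{th_3.10}, which I read as the statement that $R$ is $K_n$-regular for every $n\geq 1$: the natural map $K_n(R)\rightarrow K_n(R[x_1,\ldots,x_m])$ is an isomorphism for all $m\geq 1$. In particular, taking $m=1$, the Nil-group $NK_n(R)=\Coker\big(K_n(R)\rightarrow K_n(R[x])\big)$ vanishes for all $n\geq1$.

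First I would treat the Quillen $K$-groups. The Bass--Heller--Swan--Quillen Fundamental Theorem provides a natural splitting
$$
K_n(R[t,t^{-1}]) \approx K_n(R)\oplus K_{n-1}(R)\oplus NK_n(R)\oplus NK_n(R),
$$
the two Nil-summands recording the contributions of the two one-sided polynomial extensions $R[t]$ and $R[t^{-1}]$. Since $NK_n(R)=0$ by the regularity noted above, this collapses to
$$
K_n(R[t,t^{-1}]) \approx K_n(R)\oplus K_{n-1}(R),
$$
which is the first asserted isomorphism; for $n=1$ it reads $K_1(R[t,t^{-1}])\approx K_1(R)\oplus K_0(R)$, the classical formula for a $K_1$-regular ring.

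For the smooth $K$-groups I would transport this splitting through the comparison isomorphisms of Theorem \ref{th_3.9}. The Laurent algebra $R[t,t^{-1}]$ is the monoid algebra $R[M]$ for $M$ the infinite cyclic group, which is finitely generated and satisfies the smoothness condition \eqref{Nom1}; hence Theorem \ref{th_3.9}, applied both to $M$ and to the trivial monoid, yields $K^{sm}_n(R[t,t^{-1}]) \approx K_n(R[t,t^{-1}])$ and $K^{sm}_n(R)\approx K_n(R)$ for all $n\geq1$. Combining these with the first isomorphism gives
$$
K^{sm}_n(R[t,t^{-1}]) \approx K_n(R)\oplus K_{n-1}(R) \approx K^{sm}_n(R)\oplus K^{sm}_{n-1}(R),
$$
where the last step uses Theorem \ref{th_3.9} again on the summand $K_{n-1}(R)$ when $n\geq2$, and the definitional identity $K^{sm}_0(R)=K_0(R)$ for the edge case $n=1$.

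The main obstacle I anticipate is justifying the applicability of the Fundamental Theorem to $R=A\widehat\otimes\mathcal{K}$, which is non-unital since $\mathcal{K}$ is. Here the excision and $H$-unitality of $R$ furnished by Theorem \ref{th_2.9} are exactly what is needed to run the Bass--Heller--Swan argument, or equivalently to pass to the unitalization without introducing spurious correction terms. A secondary point requiring care is the verification that the infinite cyclic monoid satisfies \eqref{Nom1}, so that Theorem \ref{th_3.9} genuinely applies to $R[t,t^{-1}]$; this proceeds as in the polynomial case, the only subtlety being that a smooth path on the compact interval $I$ lands in a single finite-support piece of the colimit defining the monoid-algebra topology.
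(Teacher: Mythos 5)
Your proposal is correct and follows essentially the same route as the paper: both arguments reduce the first isomorphism to the Bass--Heller--Swan--Quillen Fundamental Theorem, use Theorem \ref{th_3.10} to kill the Nil-terms, invoke the excision/$H$-unitality of $A\widehat\otimes\mathcal{K}$ (via passage to the unitalization) to handle non-unitality, and obtain the smooth statement by transporting along the comparison isomorphisms of Theorem \ref{th_3.9}. The only presentational difference is that the paper carries out the unitalization step explicitly, proving $K_n(B^{+})\approx K_n(B^{+}[t])$ by a diagram chase before applying the Fundamental Theorem to the unital rings $B^{+}$ and $\mathbb{C}$, whereas you state the split form with Nil-summands directly for the non-unital ring and note that excision justifies it.
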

\begin{proof}
Denote $B=A\widehat\otimes\mathcal{K}$ and let $B^{+}$ be the unital Frechet algebra obtained by adding the unit to $B$. The following split exact sequences
$$
0\rightarrow B\rightarrow B^{+}\rightarrow\mathbb C\rightarrow 0,
$$
$$
0\rightarrow B[t]\rightarrow B^{+}[t]\rightarrow\mathbb C[t]\rightarrow 0
$$
and
$$
0\rightarrow B[t,t^{-1}]\rightarrow B^{+}[t,t^{-1}]\rightarrow\mathbb C[t,t^{-1}]\rightarrow 0
$$
 induce the split exact sequences of K-groups
\begin{align*}
&0\rightarrow K_{n}(B)\rightarrow K_{n}(B^{+})\rightarrow K_{n}(C)\rightarrow 0,\\
&0\rightarrow K_{n}(B[t])\rightarrow K_{n}(B^{+}[t])\rightarrow K_{n}(C[t])\rightarrow 0,\\
&0\rightarrow K_{n}(B[t,t^{-1}])\rightarrow K_{n}(B^{+}[t,t^{-1}])\rightarrow K_{n}(C[t,t^{-1}])\rightarrow 0
\end{align*}
for $n\geq 0$, since the algebras $B$, $B[t]$ and $B[t,t^{-1}]$ have the excision property in algebraic K-theory.

If we consider the commutative diagram

$$
\xymatrix{
0\ar[r] & K_{n}(B) \ar[d]\ar[r] & K_{n}(B^{+}) \ar[d]\ar[r] & K_{n}(C) \ar[d]\ar[r] & 0\\
0\ar[r] & K_{n}(B[t])\ar[r] & K_{n}(B^{+}[t])\ar[r] & K_{n}(C[t]) \ar[r] & 0\\
 }
$$
with vertical homomorphisms of K-groups induced by natural injections, we conclude that the middle vertical homomorphism is an isomorphism,
since the right vertical one is a well known isomorphism and the other left vertical homomorphism is an isomorphism too by Theorem \ref{th_3.10}

Thus the Fundamental Theorem of algebraic K-theory gives us the following isomorphisms
\begin{align*}
&K_{n}(B^{+}[t,t^{-1}])\approx K_{n}(B^{+})\oplus K_{n-1}(B^{+}),\\
&K_{n}(\mathbb C[t,t^{-1}])\approx K_{n}(\mathbb C)\oplus K_{n-1}(\mathbb C)
\end{align*}
for $n\geq 1$. From these isomorphisms and the above obtained short split exact sequences of $K$-groups immediately follows the required first isomorphism of the Corollary.The second isomorphism is a consequence of the first isomorphism and Theorem \ref{th_3.9}.This completes the proof.
\end{proof}

\begin{rem}
I would like to thank Guillermo Cortinas informing me that for countable monoid $M$ Theorems 3.8 and 3.9 can be obtained respectively by Theorem 6.2.1 \cite{CoTh} and for m-convex Frechet algebra with uniformly bounded approximate unit by applying argument of Theorem 12.1.1 and Remark 12.1.4 \cite{Co}.
\end{rem}


\

\

\begin{bibdiv}
\begin{biblist}

\bib{Ba}{article}{
author={Bass, H.},
title={Some problems in classical algebraic K-theory, Algebraic K-Theory II},
booktitle={Lecture Notes in Math. 342, Springer-Verlag, New York, Berlin},
date={1973},
pages={1-73},
}

\bib{Co}{article}{
author={Cortinas, G},
title={Algebraic v. topological K-theory: a friendly match. In: Topics in algebraic and topological K-theory},
journal={Springer Lecture Notes in Mathematics},
date={2008},
}

\bib{CoTh}{article}{
author={Cortinas, G.},
author={Thom, A.},
title={Comparison between algebraic and topological K-theory of locally convex algebras},
journal={Adv. Math.},
volume={218},
date={2008},
pages={266-307},
}

\bib{CoHaWe}{article}{
author={Cortinas, G.},
author={Haesemeyer, C.},
author={Weibel, C.},
title={cdh-fibrant Hochschild homology and a conjecture of Vorst},
journal={ J. Amer. Math. Soc.},
volume={21},
date={2008},
pages={547-561},
}

\bib{Ge}{article}{
author={Gersten, S.},
title={Higher K-theory of rings, Algebraic K-theory I},
booktitle={Lecture Notes in Math. 341, Springer-Verlag, New York, Berlin},
date={1973},
pages={1-40},
}

\bib{Gu}{article}{
author={Gubeladze, J.},
title={Classical algebraic K-theory of monoid algebras},
booktitle={Lecture Notes in Math. 1437, Springer-Verlag, New York, Berlin},
date={1990},
pages={36-94},
}

\bib{Gu1}{article}{
author={Gubeladze, J.},
title={K-theory of affine toric varieties},
journal={Homology,Homotopy and Applications},
volume={1(5)},
date={1999},
pages={135-145},
}

\bib{Hi}{article}{
author={Higson, N.},
title={Algebraic K-theory of stable C*-algebras},
journal={Adv. Math.},
volume={67},
date={1988},
pages={1-140},
}

\bib{In}{article}{
author={Inassaridze, H.},
title={Algebraic K-theory of normed algebras},
journal={K-Theory},
volume={21(1)},
date={2000},
pages={25-56},
}

\bib{InKa}{article}{
author={Inassaridze, H.},
author={Kandelaki, T.},
title={K-theory of stable generalized operator algebras},
journal={K-Theory},
volume={27},
date={2002},
pages={103-110},
}

\bib{InKa1}{article}{
author={Inassaridze, H.},
author={Kandelaki, T.},
title={Smooth K-theory of locally convex algebras},
journal={Communications in Contemp. Math.},
volume={13(4)},
date={2011},
pages={553-577},
}

\bib{Ka}{article}{
author={Karoubi, M.},
title={La periodicite de Bott en K-theorie generale},
journal={Ann. Sci. Ec. Norm. Sup.},
volume={4},
date={1971},
pages={63-95},
}

\bib{KaVi}{article}{
author={Karoubi, M.},
author={Villamayor, O.},
title={K-theorie algebrique et K-theorie topologique},
journal={J. Math. Scand.},
volume={28(2)},
date={1971},
pages={265-307},
}

\bib{Qu}{article}{
author={Quillen, Q.},
title={Higher algebraic K-theory I, Algebraic K-theory I},
booktitle={Lecture Notes in Math. 341, Springer-Verlag, New York, Berlin},
date={1973},
pages={77-139},
}

\bib{Ro}{article}{
author={Rosenberg, J.},
title={The algebraic K-theory of operator algebras},
journal={K-Theory},
volume={12(1)},
date={1997},
pages={75-99},
}

\bib{Ro1}{article}{
author={Rosenberg, J.},
title={Comparison between algebraic and topological K-theory for Banach algebras and C*-algebras},
booktitle={in Handbook of K-theory, eds. E.M.Friiedlander and D.Grayson, Springer-Verlag},
date={2004},
pages={843-874},
}

\bib{SuWo}{article}{
author={Suslin, A.},
author={Wodzicki, M.},
title={Excision in algebraic K-theory},
journal={Ann. Math.},
volume={136(1)},
date={1992},
pages={51-122},
}

\bib{Sw}{article}{
author={R.G.Swan, R.G.},
title={Some relations between higher K-functors},
journal={J. Algebra},
volume={21(1)},
date={1972},
pages={113-136},
}

\bib{Ti}{article}{
author={Tillmann, U.},
title={K-theory of fine topological algebras, Chern character and assembly},
journal={K-Theory},
volume={6(1)},
date={1992},
pages={57-86},
}

\bib{Tr}{book}{
author={Treves, F.},
title={Topological Vector Spaces, Distributions and Kernels},
series={}
publisher={Academic Press},
place={New York, London},
date={1967}
}

\bib{Wo}{article}{
author={Wodzicki, M.},
title={Excision in cyclic homology and in rational algebraic K-theory},
journal={Ann. Math.},
volume={120},
date={1989},
pages={591-639},
}

\bib{Wo1}{article}{
author={Wodzicki, M.},
title={Algebraic K-theory and functional analysis},
booktitle={in First European Congress of Mathematics, Vol. II (Paris,1992), Progr. 120, Birkhauser, Basel},
date={1994},
pages={485-496},
}

\end{biblist}
\end{bibdiv}
\end{document}